\newtheorem{theorem}{Theorem}[section]
\newtheorem{lemma}{Lemma}[section]
\newtheorem{remark}{Remark}[section]
\begin{document}
 
\title{Block Preconditioners for the Marker-and-Cell Discretization of the Stokes--Darcy Equations}

\author{Chen Greif\thanks{Department of Computer Science, The University of British Columbia, Vancouver, BC, V6T 1Z4, Canada. The work of the first author was supported in part by a Discovery Grant of the Natural Sciences and Engineering Research Council of Canada. \tt{greif@cs.ubc.ca},  \tt{yunhui.he@ubc.ca}. } 
\and Yunhui He\footnotemark[1]}

\maketitle

\begin{abstract}
We consider the problem of iteratively solving large and sparse double saddle-point  systems arising from the stationary Stokes--Darcy equations in two dimensions, discretized by the Marker-and-Cell (MAC) finite difference method.  We analyze the eigenvalue distribution of a few ideal block preconditioners. We then derive practical preconditioners that are based on approximations of Schur complements that arise in a block decomposition of the double saddle-point  matrix. We show that including the interface conditions in the preconditioners is key in the pursuit of scalability. Numerical results show good convergence behavior of our preconditioned GMRES solver and demonstrate robustness of the proposed preconditioner with respect to the physical parameters of the problem. 
\end{abstract}

\vskip0.3cm {\bf Keywords.}
 Stokes--Darcy equations, Marker-and-Cell, double saddle-point systems,  iterative solution,   preconditioning, eigenvalues

\vskip0.3cm {\bf AMS.}
65F08, 65F10, 65N06


\section{Introduction}\label{sec:intro}

The numerical solution of coupled fluid problems has attracted a considerable attention of researchers and practitioners in the past few decades, in large part due to the importance of these problems and the computational challenges that they pose. The Stokes--Darcy model is an example of such a problem, and is the topic of this paper. The equations describe the flow of fluid across two subdomains: in one subdomain the fluid flows freely, and in the other it flows through a porous medium. The interface between the subdomains couples the two flow regimes and plays a central physical, mathematical, and computational role. It poses a challenge because the flow behaves significantly differently in terms of scale and other properties in each of the subdomains, and an abrupt change of scale may occur at the interface.
There are several relevant applications of interest here: flow of water through sand and rock, flow of blood through arterial vessels, problems in hydrology, environment and climate science, and other applications; see, e.g., the comprehensive survey~\cite{discacciati2009navier}. 

As far as the numerical solution of the equations is concerned, different types of discretizations have been investigated; for example, finite element methods \cite{karper2009unified,zhang2009low,chen2016weak,marquez2015strong,bernardi2008mortar}, finite difference/volume methods \cite{rui2020mac,shiue2018convergence,luo2017uzawa}, and other methods \cite{wang2019divergence,fu2018strongly}.  
In addition to methods that solve the problem for the entire domain at once, there are also domain decomposition methods or iteration-by-subdomain methods, which solve separately the Stokes and the Darcy problems in an iterative fashion \cite{discacciati2007robin}.
See also \cite{riviere2005locally,layton2002coupling,cao2010coupled,caiazzo2014classical,
babuvska2010residual,mu2007two,hessari2015pseudospectral,keyes2013multiphysics} and the references therein.

The Marker-and-Cell (MAC) scheme belongs to the class of finite difference methods, and is our  focus in this work.  MAC was proposed  in \cite{harlow1965numerical}   for the Stokes and Navier--Stokes equations. To achieve numerical stability, the scheme uses  staggered grids in which the velocity and pressure are discretized at different locations of a grid cell.   MAC  has been used extensively for fluid flow problems, and significant effort has been devoted to the study of this scheme for the coupled Navier--Stokes and Darcy flows \cite{lai2019simple},   Stokes--Darcy--Brinkman equations \cite{sun2019stability},  the compressible Stokes equations \cite{eymard2010convergence}, and other multiphysics applications \cite{liu2001energy,duretz2011discretization}.  A review of the Marker-and-Cell method  can be found in \cite{mckee2008mac}. 

As shown in \cite{nicolaides1992analysis,mckee2004recent, rui2020mac} and several other references, the MAC scheme has a few advantages. It is well-tested and well-understood for standard fluid flow problems, and it allows for a relatively simple  implementation. 
For the Stokes problem, it has been shown that the MAC method can be derived 
directly from  a finite element method \cite{HW1998}.
For the Navier--Stokes problem, the MAC method can be interpreted as a mixed finite element
 method of the velocity-vorticity
variational formulation \cite{GL1996}.
 Recent papers prove numerical stability and convergence of the Stokes--Darcy problems \cite{shiue2018convergence, sun2019stability}. In this paper we use the discretization introduced in \cite{shiue2018convergence}.

Preconditioners for GMRES  for  the   mixed Stokes--Darcy model discretized by mixed finite element method have been proposed in \cite{cai2009preconditioning}. In \cite{chidyagwai2016constraint} an indefinite constraint preconditioner was studied. In \cite{beik2022preconditioning} an augmented Lagrangian approach is used, and a field-of-values analysis is performed.
For multigrid solvers, the main challenge is in designing effective smoothers for the coupled discrete systems.  In \cite{luo2017uzawa}, the authors develop an Uzawa smoother for the Stokes--Darcy problem discretized by finite volumes on staggered grids.
The recent paper~\cite{mardal2022robust} provides an interesting description of some challenges that arise with various formuations of the problem. The authors show that 
standard preconditioning approaches based on natural norms are not parameter-robust, and they propose preconditioners that 
utilize non-standard and non-local operators, which are based on fractional derivatives. 
 For additional useful references on solution approaches for solving the problem, see \cite{schmalfuss2021partitioned,beik2022preconditioning}.

In this work, we focus on preconditioning for the stationary Stokes--Darcy problem discretized by the MAC scheme. We propose block-structured preconditioners, perform a spectral analysis of the preconditioned operators, and show that  they  are suitable for preconditioned GMRES. Taking advantage of the sparsity structure of the matrix and using effectively the coupling equations, we develop inexact approximations of the Schur complements and show that the iterative scheme performs robustly.

In Section \ref{sec:equations} we review  the continuous Stokes--Darcy equations and in Section \ref{sec:discretization} we describe the MAC scheme for discretizing  them.  We develop  block preconditioners and their inexact versions in Section \ref{sec:preconditioner}.  In Section \ref{sec:numerical} numerical results are presented. Finally, we draw some conclusions in Section \ref{sec:conclusion}. 
  
%

\section{Governing equations}\label{sec:equations}

We consider the coupled Stokes--Darcy problem in a two-dimensional domain comprised of two non-overlapping subdomains, $\Omega = \Omega_d \bigcup\Omega_s$; see Figure~\ref{fig:domain}. In the bounded domain $\Omega_s$ we have a free fluid flow, and in $\Omega_d$ the flow is in a porous region. The flows are coupled across the interface $\Gamma$. 

 \begin{figure}
 \begin{center}
\includegraphics[width=0.45\textwidth]{./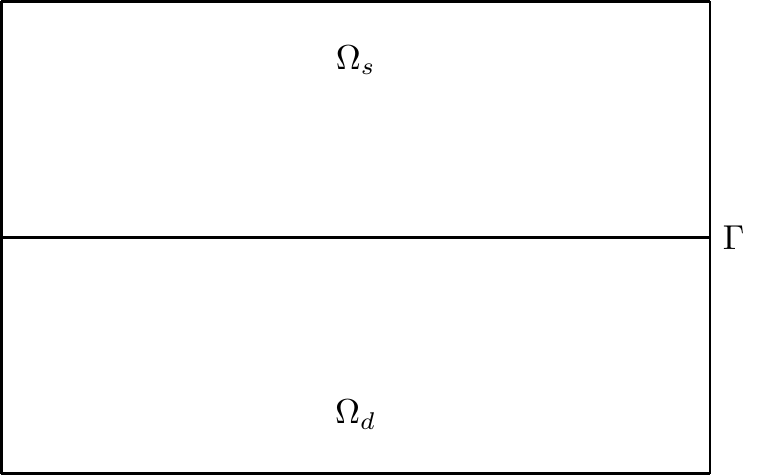}
\caption{Two-dimensional domain for the Stokes--Darcy problem. The interface is marked by $\Gamma$. \label{fig:domain}} 
\end{center}
\end{figure}

The Darcy equations in two dimensions  for porous medium flow are given by
\begin{subequations}
\label{eq:Darcy}
\begin{align}
  {K}^{-1}\bm{u}^d+\nabla p^d&=0 \quad  {\rm in}\,\,\Omega_d,  \label{eq: Darcy-form1}\\
  \nabla\cdot \bm{u}^d&= f^d \quad  {\rm in}\,\,  \Omega_d, \label{eq: Darcy-form2}
\end{align}
\end{subequations}
where $\bm{u}^d=( u^d, v^d )$ is the velocity and  $p^d$ is the fluid pressure inside the porous medium. ${K}$ is the hydraulic (or permeability) tensor, representing the properties of the porous medium and the fluid. Throughout this paper  we will assume ${K}=\kappa {I}$, where $\kappa>0$ and $I$ is the identity matrix. This amounts to treating the porous medium  as homogeneous and isotropic, and we call $\kappa$ the permeability constant. 

Denoting $\phi=p^d$ we rewrite \eqref{eq: Darcy-form1} and \eqref{eq: Darcy-form2} in primal form:
\begin{equation}
-\nabla \cdot (\kappa \nabla \phi) =f^d \quad  {\rm in}\,\,  \Omega_d.
\label{eq:phi}
\end{equation} 

 The free-flow problem is described by the Stokes equations 
\begin{subequations}
\label{eq:Stokes}
\begin{align}
  -\nu \triangle \bm{u}^s+\nabla p^s&=\bm{f}^s \quad  {\rm in}\,\,\Omega_s, \label{eq: stokes-form1}\\
  \nabla\cdot \bm{u}^s&= 0 \quad  {\rm in}\,\,\Omega_s, \label{eq: stokes-form2}
\end{align}
\end{subequations}
where $\bm{u}^s=( u^s, v^s)$ is the fluid velocity vector,  $p^s$ is the fluid pressure, and $\nu$ is the fluid viscosity.    

Denoting $(\phi, \bm{u}, p)=(p^d, \bm{u}^s, p^s)$, 
Equations \eqref{eq:phi}--\eqref{eq:Stokes} give us the  Stokes--Darcy problem in primal form, with three variables:
\begin{subequations}
\begin{align}
-\kappa \triangle \phi &= f^d \quad  {\rm in}\,\,  \Omega_d, \label{eq:Darcy-phi}\\
  -\nu \triangle \mathbf{u}+\nabla p&=\bm{f}^s\quad  {\rm in}\,\,\Omega_s, \label{eq:Stokes_momentum}\\
  \nabla\cdot \mathbf{u}&= 0\quad  {\rm in}\,\,\Omega_s. \label{eq:incompressibility}
  \end{align}
\end{subequations}
This is an alternative formulation to the one given by Equations \eqref{eq:Darcy} and \eqref{eq:Stokes}, and we will focus from this point onward on this primal form.
The problem is completed by setting interface conditions and imposing boundary conditions. 

The interface conditions can be thought of as the equivalent of a boundary layer through which the velocity changes rapidly.  
The following three interface conditions are often used to couple the Darcy and Stokes equations at the interface $\Gamma$: 
\begin{subequations}
\begin{align}
v = -\kappa \frac{\partial \phi}{\partial y}; \label{eq:masscon}\\
p-\phi  = 2 \nu \frac{\partial v}{\partial y};   \label{eq:balance}\\
u  = \frac{\nu }{\alpha} \left(\frac{\partial u}{\partial y}+ \frac{\partial v}{\partial x}  \right); \label{eq:BJS} 
\end{align}
\label{eq:interface_conditions}
\end{subequations}%
Equation~\eqref{eq:masscon} is a {\em mass conservation condition}, and it guarantees continuity of normal velocity components. Equation \eqref{eq:balance} is a condition on the {\em balance of normal forces}, and it  allows the pressure to be discontinuous across the interface. Finally, \eqref{eq:BJS}, the {\em Beavers-Joseph-Saffman condition}, provides a suitable slip condition on the tangential  velocity. 

The  physical and mathematical properties associated with the interface conditions have been extensively studied in the literature; see, e.g., \cite{tlupova2022domain,hou2019solution}.
A central challenge in the solution of the Stokes--Darcy equations is that the equations governing each domain are fundamentally different. This difficulty is manifested especially when the parameters involved, specifically the viscosity coefficient $\nu$ and permeability constant $\kappa$, differ from each other by a few orders of magnitude. 


\section{Discretization}\label{sec:discretization}
The Marker-and-Cell  scheme \cite{mckee2008mac,eymard2010convergence} is an established and popular discretization technique that has been extensively used in the solution of fluid flow problems \cite{sun2019stability,rui2020mac,shiue2018convergence}.   The components of the velocity and the pressure are discretized at different locations on the grid, in a way that aims at accomplishing numerical stability. Figure~\ref{fig:MAC} shows the location  the discrete variables for \eqref{eq:phi}--\eqref{eq:Stokes}.

\begin{figure}[htp]
\centering
\includegraphics[width=0.4 \textwidth]{./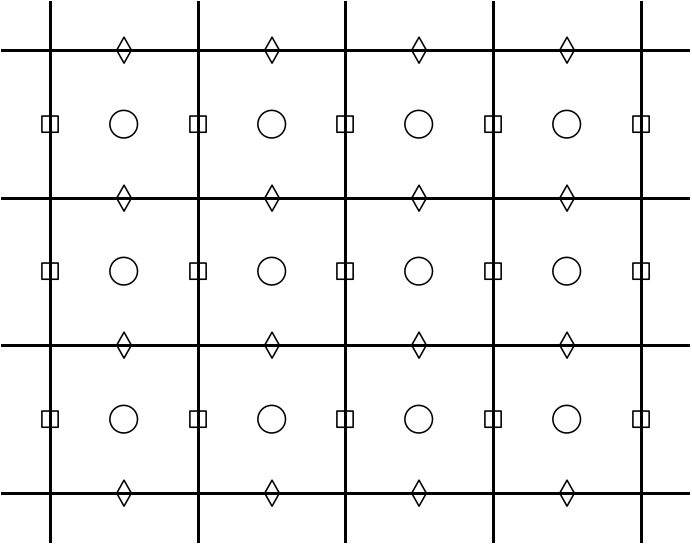}
\hspace{3mm}
\includegraphics[width=0.4 \textwidth]{./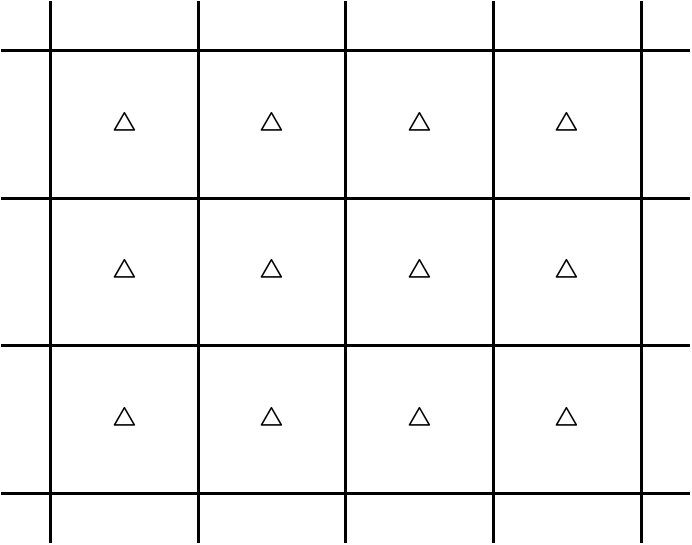}
 \caption{The locations of unknowns on staggered grids. Left: the Stokes variables: $\Box$ -- $u$,  $\lozenge$ -- $v$, $\bigcirc$ --  $p$; Right: the Darcy variable: $\triangle$ --  $\phi$. \label{fig:MAC}}
\end{figure}

The stability and convergence order of the MAC discretization for the Stokes--Darcy equations have been established in the literature. In  \cite{shiue2018convergence}, a stability analysis is performed for the velocity and the pressure, and error estimates are given for uniform grids. 
Let the two subdomains have the same length, $L$, in the $y$ direction.
By \cite[Theorem 4.1]{shiue2018convergence}, if 
\begin{equation}
h \le \min \left\{ \frac{\nu \kappa}{2L},  \frac{2\alpha}{L} \right\},
\label{eq:convcond}
\end{equation}
then first-order convergence is guaranteed. In some of the tests in that paper second-order convergence was in fact experimentally observed. Our discretization follows the discretization of \cite{shiue2018convergence}. In Section \ref{sec:numerical} we provide a brief experimental study of errors. We note that  in \cite{rui2020mac} the authors use a finite volume technique for the tensor format of the fluid operator
near the interface and prove that under the assumption that the solution is sufficiently smooth, second-order
convergence is obtained in the $L_2$-norm for both velocity and pressure of the Stokes and Darcy
flows.

\subsection{Discretization at interior gridpoints for Stokes}\label{sub:Dis-Stokes}
Suppose the Stokes domain is given by $[x^s_{\min},x^s_{\max}] \times [y^s_{\min},y^s_{\max}]$, with $x^s_{\max}-x^s_{\min}=y^s_{\max}-y^s_{\min}$. We consider a uniform mesh with $n+1$ gridpoints in each direction, yielding meshsize $$h=\frac{x^s_{\max}-x^s_{\min}}{n}=\frac{y^s_{\max}-y^s_{\min}}{n}.$$

 For simplicity, throughout we assume that the Stokes and the Darcy domains are both square and are of the same size. We assign double subscripts to the gridpoints, which mark their locations on the grid.
Throughout we will assume that, for a function $f(x,y)$ for example, a value written as $f_{i,j}$ corresponds to an approximation or an exact evaluation of the function  at $x=ih$ and $y=jh$. The same applies for a `half index.'  
Given a double index $(i,j)$, in the MAC configuration the discrete solution for the corresponding  $u$ variable is denoted as $u_{i, j+\frac12},$ and for the corresponding $v$ variable it is denoted as $v_{i+\frac12,j}$.
 Figure~\ref{fig:interior} provides a schematic illustration of the discretization for the interior variables.

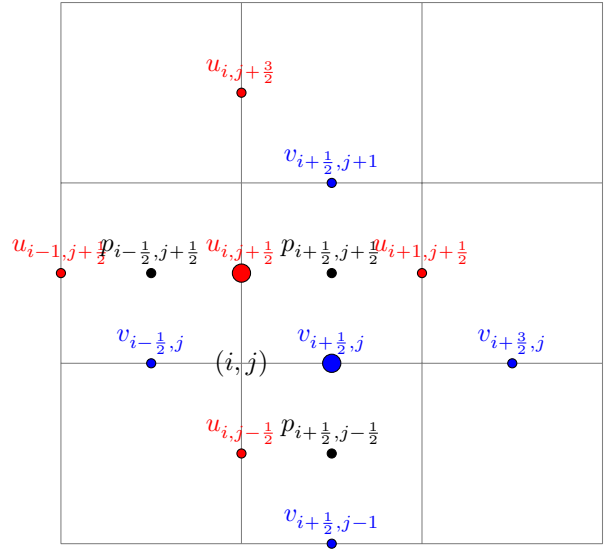
\begin{figure} 
\begin{center}
\begin{tikzpicture}[xscale=0.8,yscale=0.8,>=stealth]
\draw[step=3cm,gray,thin] (-6.001,0) grid (3,9);
\node at (-3,3) {$(i,j)$};
\draw (-3,7.5)[fill=red] circle (0.5ex) node[above] {\color{red}{$u_{i,j+\frac32}$}};
\draw (-3,4.5)[fill=red] circle (1ex) node[above] {\color{red}{$u_{i,j+\frac12}$}};
\draw (-3,1.5)[fill=red] circle (0.5ex) node[above]{\color{red} {$u_{i,j-\frac12}$}};
\draw (0,4.5)[fill=red] circle (0.5ex) node[above] {\color{red}{$u_{i+1,j+\frac12}$}};
\draw (-6,4.5)[fill=red] circle (0.5ex) node[above]{\color{red} {$u_{i-1,j+\frac12}$}};
\draw (1.5,3)[fill=blue] circle (0.5ex) node[above]{\color{blue} {$v_{i+\frac32,j}$}};
\draw (-1.5,6)[fill=blue] circle (0.5ex) node[above]{\color{blue}  {$v_{i+\frac12,j+1}$}};
\draw (-1.5,0)[fill=blue] circle (0.5ex) node[above]{\color{blue}  {$v_{i+\frac12,j-1}$}};
\draw (-1.5,3)[fill=blue] circle (1ex) node[above]{\color{blue}  {$v_{i+\frac12,j}$}};
\draw (-4.5,3)[fill=blue] circle (0.5ex) node[above]{\color{blue}  {$v_{i-\frac12,j}$}};
\draw (-1.5,4.5)[fill=black] circle (0.5ex) node[above]{\color{black}  {$p_{i+\frac12,j+\frac12}$}};
\draw (-1.5,1.5)[fill=black] circle (0.5ex) node[above]{\color{black}  {$p_{i+\frac12,j-\frac12}$}};
\draw (-4.5,4.5)[fill=black] circle (0.5ex) node[above]{\color{black}  {$p_{i-\frac12,j+\frac12}$}};
\end{tikzpicture}
\caption{Discretization of interior gridpoints for the Stokes equations. The gridpoints about which the discretizations are given are marked with bigger circles. The red circles mark $u$ variables and the blue circles mark $v$ variables. The black circles denote pressure. \label{fig:interior}}
\end{center}
\end{figure}

 To further describe the discretization, it is useful to write the Stokes momentum equation \eqref{eq:Stokes_momentum} in scalar form:
 \begin{equation}
 \left\{ \begin{aligned}
  -\nu \left(\frac{\partial^2 u}{\partial x^2} + \frac{\partial^2 u}{\partial y^2} \right) + \frac{\partial p}{\partial x} 
= f^s_1, \\
-\nu \left(\frac{\partial^2 v}{\partial x^2} + \frac{\partial^2 v}{\partial y^2} \right) + \frac{\partial p}{\partial y} = f^s_2,
\end{aligned}  \right.
\label{eq:stokes_momentum}
\end{equation}
where $f^s_i, i=1,  2$ denote the vector-components of $\bm{f}^s$ corresponding to the velocity components $u$ and $v$.
Using centered differences 
for the first and second derivatives, the corresponding discretization for the first equation in \eqref{eq:stokes_momentum}  at gridpoint $(ih, (j+\frac12)h)$ is given by
{\footnotesize
\begin{equation*}
 -\nu \left(\frac{u_{i+1,j+\frac12}+u_{i-1,j+\frac12}+u_{i,j+\frac32}+u_{i,j-\frac12} - 4 u_{i,j+\frac12}}{h^2}  \right) + 
\frac{p_{i+\frac12,j+\frac12}-p_{i-\frac12,j+\frac12}}{h}
 = ({f^s_1})_{i,j+\frac12},
 \end{equation*}
 }%
whereas the discretization for the second equation in \eqref{eq:stokes_momentum}  at gridpoint $((i+\frac12)h,jh)$ is
{\footnotesize
\begin{equation*}
-\nu \left(\frac{v_{i+\frac12,j+1}+v_{i+\frac12,j-1}+v_{i+\frac32,j}+v_{i-\frac12,j} - 4 v_{i+\frac12,j}}{h^2}  \right) +  \frac{p_{i+\frac12,j+\frac12}-p_{i+\frac12,j-\frac12}}{h}
 = ({f^s_2})_{i+\frac12,j}. 
 \end{equation*}
 }%
 Given the staggered grid configuration, we have  $n(n-1)$ gridpoints for $u$ and the same number for $v$, but the internal indexing is different between those two velocity components. For the $u$ variables, the interior gridpoints correspond to $(x_i, y_{j+\frac12}), 1 \le i \le n-1, 0 \le j \le n-1$, and for the $v$ variables the interior gridpoints correspond to $(x_{i+\frac12}, y_j), 0 \le i \le n-1, 1 \le j \le n-1$.

 {\em Boundary conditions.}  If Dirichlet boundary conditions are given, the values for the $u$ gridpoints are prescribed for the vertical boundary points corresponding to $i=0$ and $i=n$. For the horizontal boundary values corresponding to the $u$ variables, since the discrete values closest to the top boundary, i.e., with respect to $j=n$, appear as $u_{i,n-\frac12}, 1 \leq i \leq n-1,$ and are not right on the boundary, we define ghost variables  $u_{i,n+\frac12}, 1 \leq i \leq n-1,$ and use an average $$u_{i,n} = \frac{u_{i,n-\frac12}+u_{i,n+\frac12}}{2}$$ to assign the boundary conditions. It follows that $u_{i,n+\frac12} = 2 u_{i,n} - u_{i,n-\frac12}$, which is used in the discrete Stokes equations for $u_{i,n-\frac12}$. This follows a standard approach; see, for example, \cite{chen2016finite}. The points near $j=0$ are treated separately as part of the interface conditions; see Section~\ref{sec:interface}.
 
As for the $v$ variables, for $j=0$ see Section~\ref{sec:interface}, which describes the interface conditions. For $j=n$ the Dirichlet boundary conditions are prescribed directly. For the discrete values $v_{\frac12,j}$ and $v_{n-\frac12,j}, 1 \leq j \leq n-1,$ we use averages $$v_{0,j} = \frac{v_{-\frac12,j}+v_{\frac12,j}}{2}\quad {\rm and}\quad  v_{n,j} = \frac{v_{n-\frac12,j}+v_{n+\frac12,j}}{2}$$ respectively, from which we extract the ghost variables
$v_{-\frac12,j}$ and $v_{n+\frac12,j}$  and substitute them in the discrete Stokes  equations,  analogously to the $u$ variables.

For example, the discretization of the second equation in~\eqref{eq:stokes_momentum} at gridpoint
$(\frac12 h, h)$ is given by
 \begin{equation*}
 -\nu \,\frac{v_{-\frac{1}{2},1}+ v_{\frac{3}{2},1} + v_{\frac{1}{2},0}+v_{\frac{1}{2},2} -4v_{\frac{1}{2},1}}{h^2} +
 \frac{p_{\frac{1}{2}, \frac{3}{2}}- p_{\frac{1}{2}, \frac{1}{2}}}{h}=(f_2^s)_{\frac{1}{2},1},
 \end{equation*}
 where $v_{-\frac{1}{2},1}$ is a ghost variable, which can be  eliminated by the linear extrapolation $(v_{-\frac{1}{2},1}+ v_{\frac{1}{2},1})/2 = v_{0,1} \equiv v_D(0,h)$, the given Dirichlet boundary condition.  Using this equation to eliminate the ghost variable, we obtain
 \begin{equation}
 -\nu \,\frac{v_{\frac{3}{2},1} + v_{\frac{1}{2},0}+v_{\frac{1}{2},2} -5v_{\frac{1}{2},1}}{h^2} +
 \frac{p_{\frac{1}{2}, \frac{3}{2}}- p_{\frac{1}{2}, \frac{1}{2}}}{h}=(f_2^s)_{\frac{1}{2},1}+\frac{2 \nu v_{0,1}}{h^2}.
 \label{eq:halfhh}
 \end{equation}

\subsection{Discretization at interior gridpoints for Darcy}\label{sub:Dis-Darcy}
The discretization for the Darcy variable, $\phi$, is simpler than the discretization for Stokes. Here we work on the part of the grid in $\Omega^d$.
The Darcy domain is given by $[x^d_{\min},x^d_{\max}] \times [y^d_{\min},y^d_{\max}]$. We assume $x^d_{\max}-x^d_{\min}=y^d_{\max}-y^d_{\min}$ and consider a uniform mesh with meshsize $h$, similarly to the Stokes subdomain: $$h=\frac{x^d_{\max}-x^d_{\min}}{n}=\frac{y^d_{\max}-y^d_{\min}}{n}. $$

We assign negative grid indices for the $y$ variables: $-n \leq j \le 0$. At the gridpoint $((i+\frac12)h,(j+\frac12)h)$, the discretization for \eqref{eq:Darcy-phi} is given by
\begin{equation*}
-\kappa \left(\frac{\phi_{i+\frac12,j-\frac12}+\phi_{i+\frac12,j+\frac32}+\phi_{i+\frac32,j+\frac12}+\phi_{i-\frac12,j+\frac12} - 4 \phi_{i+\frac12,j+\frac12}}{h^2}  \right) 
 = ({f^d})_{i+\frac12,j+\frac12}.
 \end{equation*}

\subsection{Discretization of interface conditions}
\label{sec:interface}
The interface presents a few challenges. We use ghost variable to discretize our variables, as illustrated in Figure~\ref{fig:ghost}. There is a significant difference between the way the $u$ variables and the $v$ variables are handled on the interface. This is because the discrete $v$ variables lie precisely on the interface, whereas the discrete $u$ variables do not. 

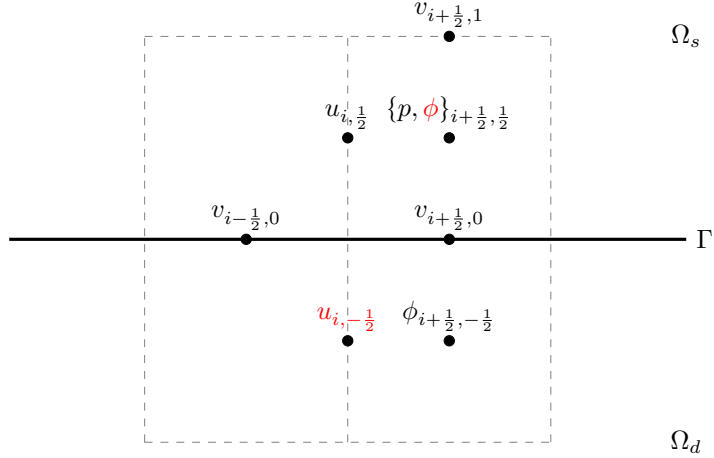
\begin{figure} 
\begin{center}
\begin{tikzpicture}[xscale=0.9,yscale=0.9,>=stealth]
\draw[step=3cm,gray,very thin,dashed] (-3.001,0) grid (3,6);
\draw[black,very thick] (-5,3)node[left]{}  --(5,3) node[right]{$\Gamma$};
  \node at (5,6) {$\Omega_s$};
    \node at (5,0) {$\Omega_d$};
\draw (1.5,3)[fill=black] circle (0.5ex) node[above] {$v_{i+\frac12,0}$};
\draw (1.5,6)[fill=black] circle (0.5ex) node[above] {$v_{i+\frac12,1}$};
\draw (0,4.5)[fill=black] circle (0.5ex) node[above] {$u_{i,\frac12}$};
\draw (0,1.5)[fill=black] circle (0.5ex) node[above] {{\color{red}$u_{i,-\frac12}$}};
\draw (-1.5,3)[fill=black] circle (0.5ex) node[above] {$v_{i-\frac12,0}$};
\draw (1.5,4.5)[fill=black] circle (0.5ex) node[above] {$\{p,{\color{red}\phi}\}_{i+\frac12,\frac12}$};
\draw (1.5,1.5)[fill=black] circle (0.5ex) node[above] {$\phi_{i+\frac12,-\frac12}$};
\end{tikzpicture}
\caption{Discretization of the variables near the interface. The ghost variables that are to be eliminated are marked in red. \label{fig:ghost}}
\end{center}
\end{figure}

Following \cite{shiue2018convergence}, the interface conditions are discretized as follows. For $1 \leq i \leq n-1$:
\begin{itemize}
\item mass conservation, $ v = -\kappa \,\, \frac{\partial \phi}{\partial y}$: 
\begin{equation}
  v_{i+\frac12,0} = -\kappa  \frac{\phi_{i+\frac12,\frac12} - \phi_{i+\frac12,-\frac12}}{h}  \label{eq:dis-mass-conservation} 
\end{equation} 

\item balance of normal forces, $p-\phi  = 2 \nu  \frac{\partial v}{\partial y}$:  
\begin{equation}
  p_{i+\frac12,\frac12}-\phi_{i+\frac12,-\frac12} = 2 \nu \, \frac{v_{i+\frac12,1}-v_{i+\frac12,0}}{h} \label{eq:dis-balance-norm}
\end{equation} 

\item Beavers-Joseph-Saffman (BJS) condition, $u  = \frac{\nu }{\alpha} \left(\frac{\partial u}{\partial y}+ \frac{\partial v}{\partial x}  \right)$:
\begin{equation}
  \frac{u_{i,\frac12} + u_{i,-\frac12}}{2} = \frac{\nu}{\alpha} \left( \frac{u_{i,\frac12}-u_{i,-\frac12}}{h}+\frac{v_{i+\frac12,0}-v_{i-\frac12,0}}{h} \right)  \label{eq:dis-BJS}
\end{equation}

\end{itemize}

Equations \eqref{eq:dis-mass-conservation}--\eqref{eq:dis-BJS} are coupled with the discretized Stokes equations  and the discretized Darcy equations.  The discretized Darcy equations for $\phi_{i+\frac12,-\frac12}$  involve the ghost values,  $\phi_{i+\frac12,\frac12}$, which can be eliminated  using \eqref{eq:dis-mass-conservation}. 

The discretized equations for interface variables $v_{i+\frac12,0}$ are formed using \eqref{eq:dis-balance-norm}.
The discretized  Stokes equations for the $u_{i,\frac12}$ variables  involve the ghost values,  $u_{i,-\frac12}$, which can be eliminated  using \eqref{eq:dis-BJS}. 
 
\subsection{The linear system}

Putting together the equations for the interior gridpoints and the interface conditions, and incorporating boundary conditions, we obtain a double saddle-point  system of the form
\begin{equation}\label{eq:coupled-Darcy-stokes-system}
\begin{pmatrix}
 A_d &      -G^T    & 0  \\ 
G   &       A_s        & B ^T \\
0 &      B               & 0  
 \end{pmatrix}
 \begin{pmatrix}
\phi_h \\
 \bm{u}_h\\
 p_h
  \end{pmatrix} = 
 \begin{pmatrix}
g_1\\
 \bm{g}_2\\
 g_3
 \end{pmatrix},
\end{equation} 
where $A_d$ corresponds to  $-\kappa \triangle$  for the Darcy  equation and $A_s (\ne A_s^T)$ is  the discretization of $-\nu \triangle$ for the Stokes equations coupled with the  discretized interface conditions. The last block row in \eqref{eq:coupled-Darcy-stokes-system} corresponds to  the (negated) divergence-free condition.
Due to the boundary and interface conditions, the coefficient matrix in \eqref{eq:coupled-Darcy-stokes-system} is nonsymmetric.
 Double saddle-point systems of a similar form have been extensively studied recently \cite{bradley2021eigenvalue,holter2021robust,cai2009preconditioning}, but the focus of spectral studies has  been on symmetric instances. In this paper we offer new insights on the nonsymmetric case.
 
 The linear system \eqref{eq:coupled-Darcy-stokes-system} has  $4n^2-n$ unknowns, and we have
$A_d \in \mathbb{R}^{n^2 \times n^2}$,  $A_s \in \mathbb{R}^{(2n^2-n)\times (2n^2-n)}$,   $G \in \mathbb{R}^{(2n^2-n)\times n^2}$,   and $B \in \mathbb{R}^{n^2\times n^2}$. In the sequel we describe the structure of the submatrices of \eqref{eq:coupled-Darcy-stokes-system}. To avoid ambiguity when it may arise, when necessary we attach subscripts to identity matrices to indicate their sizes.

 \subsubsection*{The matrix $A_d$}
The matrix $A_d$ can be naturally partitioned as a $2\times 2$ block matrix having the following structure:
\begin{equation}
A_d = \begin{pmatrix}
 A_{d,11}  &   A_{d,12}    \\ 
 A_{d,21}  &    A_{d,22}  
 \end{pmatrix}, \quad   A_d = A_d^T, \quad 
A_{d,12}=A_{d,21}^T,
\label{eq:A_d}
\end{equation} 
where $A_{d,11} \in {\mathbb R}^{(n^2-n) \times (n^2-n)}, \ A_{d,21} \in {\mathbb R}^{n \times (n^2-n)},$  $A_{d,22} \in {\mathbb R}^{n \times n}$,  and  
$$A_{d,21} =-\frac{\kappa}{h^2}(0 \quad  I_n). $$
The second block row of  $A_{d},$ namely $(A_{d,21}\,\, A_{d,22})$, corresponds to the discrete  $n$ equations for $\phi$ near the interface $\Gamma$, and it is coupled with the discrete interface variables $v$, which appear in $G^T$; see \eqref{eq:coupled-Darcy-stokes-system}.

 \subsubsection*{The matrix $A_s$}
The matrix $A_s$ is a $3\times 3$ block matrix with the structure
\begin{equation} 
A_s =\begin{pmatrix}
 A_{11} &      A_{12}   & 0  \\ 
0   &       A_{22}       & A_{23} \\
0 &      A_{32}               & A_{33}
 \end{pmatrix};
 \label{eq:As}
\end{equation} 
Figure \ref{fig:As} depicts the dimensions of the blocks.
 
The matrix $A_{12}$ is $(n^2-n) \times n$, as can be inferred from Figure \ref{fig:As}, and  it is mostly zero. It is comprised of an $(n-1) \times n$ upper bidiagonal block stacked on top of an $(n^2-2n+1) \times n$ zero block. The bidiagonal block is given by $c \cdot {\rm bidiag}[1,-1]$, where $c=\frac{2\nu^2}{h^2 (2\nu+h \alpha)}$. This matrix represents the discretization of the discrete function values $u_{i,\frac12}$, $1 \leq i \leq n-1$, which interact with the interface variables $v_{i+\frac12,0}$, using \eqref{eq:dis-BJS}. 

The matrix $A_{22}$, which corresponds to the interface $v$ variables, has dimensions $n \times n$ and a simple structure: it is equal to a scaled identity matrix with $\frac{2 \nu}{h^2}$. 

The blocks of $A_s$ satisfy
 $A_{11}=A_{11}^T, A_{22}=A_{22}^T, A_{33}=A_{33}^T$, and 
 \begin{equation*}
A_{22} =  \frac{2\nu}{h^2} I_n, \quad A_{23} = (-A_{22},\,\, 0), \quad A_{32}=\frac{1}{2}A_{23}^T.
\end{equation*} 
Notice that while both $A_{11}$ and $A_{33}$ are $(n^2-n) \times (n^2-n)$, their internal block structures are different, due to the staggered grid. The matrix $A_{11}$ (which corresponds to the $u$ variables) is block tridiagonal with $n$ blocks of dimensions $(n-1) \times (n-1)$, whereas $A_{33}$ (which corresponds to the $v$ variables) is block tridiagonal with $n-1$ blocks of dimensions $n \times n$ each.

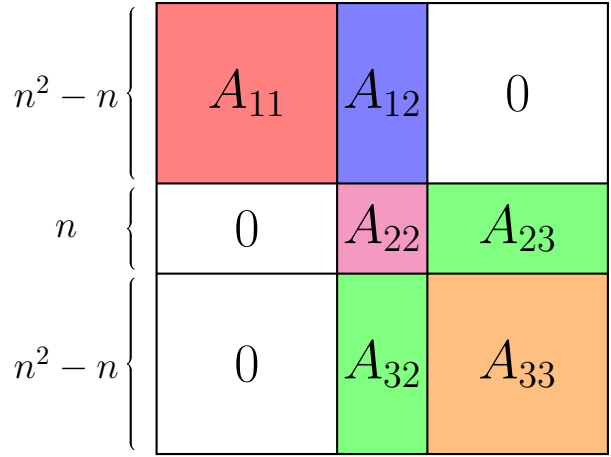
\begin{figure}
\begin{center}
\begin{tikzpicture}[thick, scale=0.6]
\draw[step=1cm,white] (0,0) grid (10,10);
\fill[red!50] (0,6) rectangle (4,10);
\fill[blue!50] (4,6) rectangle (6,10);
\fill[green!50] (4,0) rectangle (6,4);
\fill[green!50] (6,4) rectangle (10,6);
\fill[magenta!50] (4,4) rectangle (6,6);
\fill[orange!50] (6,4) rectangle (10,0);
\draw (2,8) node{{\huge $A_{11}$}};
\draw (2,5) node{{\huge $0$}};
\draw (5,8) node{{\huge $A_{12}$}};
\draw (2,2) node{{\huge $0$}};
\draw (8,8) node{{\huge $0$}};
\draw (5,2) node{{\huge $A_{32}$}};
\draw (8,5) node{{\huge $A_{23}$}};
\draw (5,5) node{{\huge $A_{22}$}};
\draw (8,2) node{{\huge $A_{33}$}};
\draw (-2,8) node{{\Large $n^2-n$}};
\draw (-2,5) node{{\Large $n$}};
\draw (-2,2) node{{\Large $n^2-n$}};
\draw (0,0) rectangle (10,10);
\draw (0,4) -- (10,4);
\draw (0,6) -- (10,6);
\draw (4,0) -- (4,10);
\draw (6,0) -- (6,10);
\draw [decorate, decoration = {calligraphic brace}] (-0.5,6.1) --  (-0.5,9.9);
\draw [decorate, decoration = {calligraphic brace}] (-0.5,4.1) --  (-0.5,5.9);
\draw [decorate, decoration = {calligraphic brace}] (-0.5,0.1) --  (-0.5,3.9);
\end{tikzpicture}
\caption{Block structure of $A_s$. \label{fig:As}}
\end{center}
\end{figure}

\subsubsection*{The coupling matrix $G$} 
 The equations for the $u_{i,\frac12}$ variables are coupled with the discrete interface variables $v_{i+\frac12,0}$, which are represented by the matrix $G$ shown in \eqref{eq:coupled-Darcy-stokes-system}. 
$G^T$ is a $2\times 3$ block matrix with the following attractively simple structure:
\begin{equation} \label{eq:G-form}
G^T =\begin{pmatrix}
0 &     0  & 0  \\ 
0   &       -I_n/h       & 0
 \end{pmatrix}.
\end{equation} 
The nonzero block arises from the discretization of $\phi_{i+\frac12,-\frac12}$, using \eqref{eq:dis-mass-conservation}.

\subsubsection*{The matrix $B$}
The matrix $B$ is a standard discrete divergence operator given by 
\begin{equation}\label{eq:B-three-block}
B =\begin{pmatrix}
B_x  & B_0  &B_y
 \end{pmatrix} \in \mathbb{R}^{n^2 \times (2n^2-n)}, \quad B_0=\begin{pmatrix} I_n/h  \\ 0 \end{pmatrix}  \in \mathbb{R}^{n^2 \times n}.
\end{equation}

\subsubsection*{Solvability conditions}
For simplicity, we assume that pure Dirichlet  boundary conditions are imposed, that is:
\begin{subequations}
\label{eq:BCs}
\begin{align}
  \bm{u}^s &=g_D^s \quad  {\rm on}\,\,\partial\Omega_s, \nonumber \\
  \phi&= g_D^d \quad  {\rm on}\,\,\partial\Omega_d. \nonumber
\end{align}
\end{subequations}
The pressure is assumed to satisfy the condition
\begin{equation*}
\int_{\Omega_s} p^s \, dx =0,
\end{equation*} 
which yields a unique solution for the discrete system \eqref{eq:coupled-Darcy-stokes-system}. 

Neumann or mixed boundary conditions are also commonly considered; see, for example, \cite{luo2017uzawa,rui2020mac,shiue2018convergence} and the references therein.

\subsection{Properties of the matrices}
\label{sec:properties}

Let us  rewrite the linear system \eqref{eq:coupled-Darcy-stokes-system} in a form that symmetrizes the off-diagonal blocks:
\begin{equation*}
\begin{pmatrix}
 A_d &      G^T    & 0  \\ 
G   &      - A_s        & B ^T \\
0 &      B               & 0  
 \end{pmatrix}
 \begin{pmatrix}
\phi_h \\
 -\bm{u}_h \\
 p_h
  \end{pmatrix} = 
 \begin{pmatrix}
 g_1\\
 \bm{g}_2\\
- g_3
 \end{pmatrix}.
\end{equation*} 
Let
\begin{equation} 
\mathcal{K}=\begin{pmatrix}
 A_d &      G^T    & 0  \\ 
G   &       -A_s        & B ^T \\
0 &      B               & 0  
 \end{pmatrix}.
 \label{eq:K}
 \end{equation}
  The blocks of  $\mathcal{K}$ satisfy a few useful properties.
\begin{enumerate}
\item $A_s$ is nonsymmetric and positive definite. 
\item $(G \quad B^T)$ has a one-dimensional null space spanned by an all-ones vector of size $2n^2$. 
\item $B$ has full rank.
\item If we consider Neumann boundary conditions for the Darcy problem, then $A_d$ is symmetric positive semidefinite with a one-dimensional null space spanned by all-ones vector.     $\mathcal{K}$ is nonsymmetric and singular with a one-dimensional null space spanned by  $\begin{pmatrix} e \\ 0 \\ e \end{pmatrix}$, where $e$ is the vector of all ones of length $n^2$ and $0$ is the zero vector of length $2n^2-n$. 
\item If we consider Dirichlet boundary conditions for the Darcy problem, then $A_d$ is symmetric   positive definite,  and   $\mathcal{K}$ 
is nonsymmetric and nonsingular. 
\end{enumerate}

\begin{lemma} All  eigenvalues of $A_s$, which  represents the Stokes equations and interface equations and is given in \eqref{eq:As}, are positive. 
\end{lemma}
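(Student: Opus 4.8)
The plan is to reduce the claim to two smaller problems by exploiting the block triangular structure of $A_s$, and then to symmetrize the genuinely nonsymmetric piece by a diagonal similarity transformation.

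\textbf{Step 1: block triangular reduction.} With respect to the $2\times 2$ partition that separates the $u$-block (first block row and column of \eqref{eq:As}) from the two $v$-blocks, $A_s$ is block upper triangular, because its $(2,1)$ and $(3,1)$ blocks both vanish. Hence
\[
\det(A_s-\lambda I)=\det(A_{11}-\lambda I)\cdot\det(\hat A-\lambda I),\qquad \hat A:=\begin{pmatrix}A_{22}&A_{23}\\ A_{32}&A_{33}\end{pmatrix},
\]
so the spectrum of $A_s$ is the union of the spectra of $A_{11}$ and of $\hat A$. The block $A_{11}$ is symmetric (Section~\ref{sec:properties}); it is a discrete Laplacian with Dirichlet-type boundary handling and the Beavers--Joseph--Saffman correction on the interface layer, and is irreducibly diagonally dominant with positive diagonal entries, hence symmetric positive definite. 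Its eigenvalues are therefore positive, and it remains to analyze $\hat A$.

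\textbf{Step 2: symmetrizing $\hat A$.} The only obstruction to symmetry of $\hat A$ is the factor $\tfrac12$ in the identity $A_{32}=\tfrac12 A_{23}^{T}$. I would scale it away: set $D:=\mathrm{diag}\!\left(\tfrac12 I_n,\; I_{n^2-n}\right)$, a positive diagonal matrix. Using $A_{22}=A_{22}^{T}$, $A_{33}=A_{33}^{T}$, and $A_{32}=\tfrac12 A_{23}^{T}$, the product
\[
D\hat A=\begin{pmatrix}\tfrac12 A_{22}&\tfrac12 A_{23}\\ \tfrac12 A_{23}^{T}&A_{33}\end{pmatrix}
\]
is symmetric. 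Since $\hat A=D^{-1}(D\hat A)$, it is similar to $D^{1/2}\hat A D^{-1/2}=D^{-1/2}(D\hat A)D^{-1/2}$, which is symmetric and congruent to $D\hat A$. Thus $\hat A$ has real spectrum, and all its eigenvalues are positive as soon as $D\hat A$ is positive definite.

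\textbf{Step 3: positive definiteness of $D\hat A$.} Its leading block $\tfrac12 A_{22}=\tfrac{\nu}{h^2}I_n$ is positive definite, so by the Schur complement criterion it suffices to show that
\[
A_{33}-\tfrac12 A_{23}^{T}A_{22}^{-1}A_{23}
\]
is positive definite. Since $A_{23}=(-A_{22},\,0)$, a direct computation gives $A_{23}^{T}A_{22}^{-1}A_{23}=\mathrm{diag}(A_{22},0)$, so this Schur complement equals $A_{33}-\mathrm{diag}\!\left(\tfrac{\nu}{h^2}I_n,\,0\right)$. Decreasing by $\tfrac{\nu}{h^2}$ the diagonal entries of $A_{33}$ that belong to the first interior layer of $v$-unknowns simply replaces the (eliminated) coupling of that layer to the interface by a homogeneous Neumann condition; the result is the discrete $v$-Laplacian carrying Dirichlet data on the remaining portion of the boundary. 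This matrix is symmetric with positive diagonal, remains weakly diagonally dominant (the modified first-layer rows have at most three off-diagonal neighbours, each of magnitude $\tfrac{\nu}{h^2}$), is strictly diagonally dominant on the rows adjacent to a Dirichlet boundary, and is irreducible since the grid graph is connected; hence it is positive definite. This gives $D\hat A$ positive definite, so $\hat A$ has positive eigenvalues, and together with Step~1 the lemma follows.

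\textbf{Main obstacle.} The delicate point is Step~3: one has to pin down exactly which diagonal entries of $A_{33}$ are reduced and verify that diagonal dominance and irreducibility survive the perturbation. The remaining steps are essentially bookkeeping about the block structure in \eqref{eq:As}, combined with standard facts about diagonal similarity, congruence, and Schur complements.
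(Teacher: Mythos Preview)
Your proposal is correct and follows essentially the same route as the paper: block-triangular reduction to $A_{11}$ and the $v$-block $\hat A$, a diagonal symmetrization of $\hat A$, and a Schur-complement computation showing that $A_{33}-\mathrm{diag}(\tfrac{\nu}{h^2}I_n,0)$ is (irreducibly) diagonally dominant. The only cosmetic difference is the choice of scaling---the paper conjugates by $\mathrm{diag}(I_n,\sqrt{2}\,I_{n^2-n})$ while you left-multiply by $\mathrm{diag}(\tfrac12 I_n,I_{n^2-n})$ and then pass to a congruence---but both lead to the identical Schur complement; your explicit mention of irreducibility in Step~3 is in fact a small improvement over the paper's slightly terser diagonal-dominance claim.
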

\begin{proof} The eigenvalues of $A_s$ are a union of the eigenvalues of $A_{11}$ and
\begin{equation*}
E =\begin{pmatrix}
 A_{22}       & A_{23} \\
 A_{32}           & A_{33}
 \end{pmatrix} = \begin{pmatrix}
 A_{22}       &2 A_{32}^T \\
 A_{32}           & A_{33}
 \end{pmatrix}.
 \end{equation*}
 The matrix $E$ is symmetrizable by a diagonal matrix
 $\tilde{D} =\begin{pmatrix}
 I_n       & 0 \\
0           & \sqrt{2} I_{n^2-n}
 \end{pmatrix},
 $ and therefore its eigenvalues are real.
Since $A_{11}$ is symmetric and diagonally dominant, its eigenvalues are positive.

Let $\tilde{A}_{32}= \sqrt{2} A_{32}$. The block $L D L^T$  decomposition of $\tilde{E}=\tilde{D} E \tilde{D}^{-1}$ is 
 \begin{equation*}
\tilde{E}=  \begin{pmatrix}
 A_{22}        & \tilde{A}_{32}^T\\
\tilde{A}_{32}          &A_{33} 
 \end{pmatrix} 
 =\begin{pmatrix}
 I_n     &  0\\
\tilde{A}_{32}A_{22} ^{-1}   & I_{n^2-n}
 \end{pmatrix} \!
 \begin{pmatrix}
 A_{22}       &  0\\
   0   &A_{33} -\tilde{A}_{32}A_{22} ^{-1}\tilde{A}_{32}^T
 \end{pmatrix} \!
  \begin{pmatrix}
 I_n       &A_{22} ^{-1}  \tilde{A}_{32}^T \\
0  & I_{n^2-n}
 \end{pmatrix} .
\end{equation*}
A simple calculation shows that 
\begin{align*}
A_{33} -\tilde{A}_{32}A_{22} ^{-1}\tilde{A}_{32}^T & = A_{33} - \frac{1}{2}(-A_{22}\,\, 0)^TA_{22} ^{-1}(-{A}_{22}\,\,0) \\
 &= A_{33} - \begin{pmatrix} \frac{\nu}{h^2}I_n & 0 \\ 0 & 0 \end{pmatrix}.
\end{align*} 
Thus, the above matrix is the same as $A_{33}$ except the top left $n \times n$ block, and we now discuss the structure of that specific block of $A_{33}$. 

The first and $n$th rows of $A_{33}$ have three nonzero elements $[-\nu/h^2, 5\nu/h^2,-\nu/h^2]$, where the value 5 is due to Dirichlet boundary conditions; see \eqref{eq:halfhh}. 
Rows  2 to $n-1$ have four nonzero elements $[-\nu/h^2, 4\nu/h^2,-\nu/h^2, -\nu/h^2]$, where the positive values  are located at the diagonal position and we have diagonal dominance here.
It follows that all  eigenvalues of $A_s$ are positive, as required.
\end{proof} 

Next, we state a rank property of $B$, which will be used later in our spectral analysis. 
\begin{lemma}\label{lem:rank-prop-B}
Define 
\begin{equation}
\bar{B} =\begin{pmatrix}
B_x     &B_y
 \end{pmatrix} \in \mathbb{R}^{n^2 \times m_2},
 \label{eq:Bbar}
\end{equation}
where $m_2=(2n^2-n)-n=2n^2-2n$. Then, ${\rm rank}(\bar{B})=n^2-1$ and the nullity of $\bar{B}$ is $(n-1)^2$.
\end{lemma}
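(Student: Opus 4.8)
\emph{Proof proposal.} The plan is to pin down $\mathrm{rank}(\bar B)$ through its left null space (equivalently $\ker(\bar B^T)$) and then read off the nullity from the rank--nullity theorem, since $\dim\ker(\bar B)=m_2-\mathrm{rank}(\bar B)=(2n^2-2n)-(n^2-1)=(n-1)^2$ once we know $\mathrm{rank}(\bar B)=n^2-1$. So the whole task reduces to showing $\dim\ker(\bar B^T)=1$, i.e.\ that the only pressure vectors $q\in\mathbb{R}^{n^2}$ with $\bar B^T q=0$ are the constant ones, namely multiples of the all-ones vector $e$.

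First I would establish $\mathrm{rank}(\bar B)\le n^2-1$ by exhibiting $e\in\ker(\bar B^T)$. Writing $\bar B=(B_x\ B_y)$ as in \eqref{eq:Bbar}, $B_x$ (resp.\ $B_y$) collects the contributions of the interior $u$ (resp.\ interior $v$) unknowns to the discrete divergence in the $n^2$ pressure cells. Every interior $u$ unknown $u_{i,l+\frac12}$ ($1\le i\le n-1$) enters exactly two cell divergences, once with coefficient $+1/h$ and once with $-1/h$, and likewise for every interior $v$ unknown; hence summing all the cell equations yields $e^T B_x=0$ and $e^T B_y=0$, i.e.\ $\bar B^T e=0$. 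The degrees of freedom that would spoil this telescoping are precisely the boundary $u$-values at $i=0,n$, the top $v$-values at $j=n$, and the interface $v$-values at $j=0$; the first two are prescribed and moved to the right-hand side, while the interface ones are exactly the columns that sit in $B_0$ and are therefore absent from $\bar B$.

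Next I would prove $\mathrm{rank}(\bar B)\ge n^2-1$ by showing $\bar B^T q=0$ forces $q$ to be constant. The equation $B_x^T q=0$ reads, at each interior $u$-location, $q_{i-1,l}=q_{i,l}$ for $1\le i\le n-1$ and $0\le l\le n-1$; for each fixed row index $l$ these equalities chain together all cells of that row, so $q$ is constant along every grid row. Symmetrically, $B_y^T q=0$ gives $q_{k,j-1}=q_{k,j}$ for $1\le j\le n-1$, so $q$ is constant along every grid column. A function on a rectangular grid that is constant along each row and each column is globally constant, hence $q\in\mathrm{span}\{e\}$. Therefore $\ker(\bar B^T)=\mathrm{span}\{e\}$, $\mathrm{rank}(\bar B)=n^2-1$, and the nullity $(n-1)^2$ follows as noted above.

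The only delicate point is the bookkeeping in the telescoping and adjoint computations: one must verify, from the stencils in Section~\ref{sec:discretization}, that $\bar B$ is exactly the standard MAC discrete-divergence operator with the left/right $u$, the top $v$, and the interface $v$ degrees of freedom deleted, so that (a) each remaining velocity unknown contributes a canceling $\pm1/h$ pair when the cell equations are summed, and (b) $B_x^T$ and $B_y^T$ act as the discrete $x$- and $y$-difference operators on $q$. Everything else is routine; alternatively, one may observe that $\bar B\bar B^T=B_xB_x^T+B_yB_y^T$ is the usual five-point Neumann Laplacian on the $n\times n$ pressure grid, whose kernel is one-dimensional, which gives the same conclusion.
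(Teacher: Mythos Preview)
The paper states Lemma~\ref{lem:rank-prop-B} without proof, so there is nothing to compare against directly. Your argument is correct and is essentially the standard one for the MAC divergence operator: you identify $\ker(\bar B^T)$ with the constant pressure vectors by (i) checking that each interior $u$ and each interior $v$ unknown contributes a cancelling $\pm 1/h$ pair to the summed cell equations, giving $\bar B^T e=0$, and (ii) reading $B_x^Tq=0$ and $B_y^Tq=0$ as the discrete $x$- and $y$-difference conditions on $q$, which force $q$ to be globally constant. The rank--nullity step then yields $\dim\ker(\bar B)=(2n^2-2n)-(n^2-1)=(n-1)^2$.

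Two minor points worth tightening when you write the proof in full. First, your indexing $q_{i-1,l}=q_{i,l}$ and $q_{k,j-1}=q_{k,j}$ is a slight relabeling of the paper's half-integer cell-center convention; just make the identification explicit. Second, the bookkeeping you flag as ``delicate'' is in fact routine here: since $B_0$ collects precisely the interface $v$-columns at $j=0$ (see \eqref{eq:B-three-block}), the remaining $B_y$-columns correspond to $v_{i+\frac12,j}$ with $1\le j\le n-1$, each of which lies strictly between two pressure cells, so the telescoping and the adjoint difference interpretation go through without edge cases. Your alternative observation that $\bar B\bar B^T$ is the five-point Neumann Laplacian on the pressure grid is also valid and perhaps the quickest route.
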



\section{Block preconditioners}\label{sec:preconditioner}

Block factorizations of the double saddle-point matrix $\mathcal{K}$ defined in \eqref{eq:K}  motivate the derivation of potential preconditioners. We write 
 \begin{eqnarray}
 \begin{aligned}
 \begin{pmatrix}
 A_d &      G^T    & 0  \\ 
G   &      - A_s        & B^T \\
0 &      B               & 0  
 \end{pmatrix} & =  
 \underbrace{\begin{pmatrix}
 I &     0  & 0  \\ 
GA_d^{-1}   &     I       &0 \\
0 &      -B S_1^{-1}             & I 
 \end{pmatrix}}_L
\underbrace{ \begin{pmatrix}
 A_d &     0  & 0  \\ 
0  &    -S_1      &0 \\
0 &     0             &S_2
 \end{pmatrix}}_D
\underbrace{
 \begin{pmatrix}
 I &    A_d^{-1}G^T     & 0  \\ 
 0  &     I                  & -S_1^{-1 }B^T \\
0 &     0          & I 
 \end{pmatrix}}_U \\
 & = 
 \underbrace{
 \begin{pmatrix}
 A_d &     0  & 0  \\ 
G  &    -S_1      &0 \\
0 &     B             &S_2
 \end{pmatrix}}_{LD}
 \underbrace{
 \begin{pmatrix}
 I &    A_d^{-1}G^T     & 0  \\ 
 0  &     I                  & -S_1^{-1 }B^T \\
0 &     0          & I 
 \end{pmatrix}}_U,
 \end{aligned}
 \label{eq:block_K}
\end{eqnarray}
where
\begin{equation}
S_1= A_s+GA_d^{-1}G^T
\label{eq:S1} 
\end{equation} 
and
\begin{equation}
S_2 = B S_1^{-1}B^T
\label{eq:S2}
\end{equation}
are Schur complements. 

In \eqref{eq:block_K} we have written two  forms of factorizations. The first is a block LDU factorization with $L$  a unit lower triangular matrix and $D$ a block diagonal one, and the second  is a block decomposition where the lower block-triangular matrix is simply the product of $LD$ in the LDU block factorization. 
We use these forms to consider  block preconditioners. The Appendix provides additional options.

Ideal preconditioners we consider and analyze are:
\begin{equation*}
 \mathcal{M}_1 =\begin{pmatrix}
 A_d &     0       & 0  \\ 
0    &    S_1    & 0 \\
0     &     0       & S_2
 \end{pmatrix}, 
 \quad   \mathcal{M}_2 =\begin{pmatrix}
 A_d &     0       & 0  \\ 
G     &    S_1    & 0 \\
0     &     0       & S_2
 \end{pmatrix}, 
\quad
 \mathcal{M}_3= \begin{pmatrix}
 A_d &     0       & 0  \\ wh
G     &    -S_1    & 0 \\
0     &     B        & S_2
 \end{pmatrix}. 
\end{equation*}
The choice of $\mathcal{M}_1$  arises from the matrix $D$ of the LDU factorization of $\mathcal{K}$; the signs are rearranged so that $\mathcal{M}_1$ is symmetric positive definite. 

Since  $\mathcal{K}$ is nonsymmetric and  $G$ is an interface matrix that contains important physical information on the coupling effect between the Stokes and Darcy equations, it makes sense to  consider block triangular preconditioners as well. The choice of $\mathcal{M}_2$ amounts to a relatively modest revision of $\mathcal{M}_1$, where the  interface matrix $G$ is added as the (2,1) block. The matrix
 $\mathcal{M}_3$ is equal to $LD$  in \eqref{eq:block_K}.

Recall from Section~\ref{sec:properties} that if Neumann boundary conditions are considered for the Darcy problem, then the matrix $A_d$ is positive semidefinite with a one-dimensional null space spanned by the all-ones vector. The singularity presents a challenge for the design of preconditioners, and we do not further pursue this scenario in this paper.  We therefore focus on Dirichlet boundary conditions, for which $A_d$ is symmetric positive definite and the Schur complements are well defined.


\subsection{Spectral analysis}
\label{sec:spectral}

There is an increasing body of literature on symmetric double saddle-point systems. Block diagonal preconditioners have been extensively analyzed \cite{beik18, beigl20,bradley2021eigenvalue,cai21,cai2009preconditioning,hm19,pearson21,pearson21b,sogn18}, including bounds on the eigenvalues and theoretical observations on their algebraic multiplicities.  
The double saddle-point matrix considered in this paper bears similarities, but it has a few distinct features, including nonsymmetry and the fact that the interface conditions are associated with only $n$ unknowns and the corresponding nonzero blocks are $n \times n$, and the other blocks of $\mathcal{K}$  are quadratic in $n$.

\begin{theorem} \label{thm:eigs-M1K}
The matrix $ \mathcal{M}_1^{-1}\mathcal{K}$ has the following eigenvalues and algebraic multiplicities: 
\begin{enumerate}[(i)]
\item  $1$ with multiplicity $n^2-n$;
\item  $-1$ with multiplicity $(n-1)^2$;
\item   $\frac{-1\pm\sqrt{5}}{2}$ with multiplicity $n^2-n$ for each.
\end{enumerate}
In addition:
\begin{enumerate}[(i)]
\item[(a)]  At most $n$ eigenvalues are larger than $1$.
\item[(b)]  At most  $n$ eigenvalues are located at $(0, 1)\setminus \left\{\frac{-1+\sqrt{5}}{2}\right\}$.
\end{enumerate} 
\end{theorem}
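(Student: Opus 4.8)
The plan is to exploit the block factorization \eqref{eq:block_K} to reduce the study of $\mathcal{M}_1^{-1}\mathcal{K}$ to a much smaller problem. Writing $\mathcal{M}_1 = \mathrm{blockdiag}(A_d, S_1, S_2)$ and using $\mathcal{K} = LD U$ with $L, D, U$ as in \eqref{eq:block_K}, one finds that $\mathcal{M}_1^{-1}\mathcal{K}$ is similar to $\mathcal{M}_1^{-1/2}\mathcal{K}\mathcal{M}_1^{-1/2}$, and after congruence by $\mathrm{blockdiag}(A_d^{-1/2}, S_1^{-1/2}, S_2^{-1/2})$ the matrix $\mathcal{K}$ becomes a block matrix whose $(1,1)$, $(2,2)$ are $\pm I$ and whose off-diagonal blocks are the normalized operators $X := S_1^{-1/2} G A_d^{-1/2}$ and $Y := S_2^{-1/2} B S_1^{-1/2}$. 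The eigenvalue problem then decouples in the usual saddle-point fashion: an eigenvalue $\lambda \neq 1$ (coming from the Darcy block) must satisfy a quadratic/cubic relation in terms of the singular values of $X$ and $Y$. Concretely, I would derive that the nonunit eigenvalues $\lambda$ of $\mathcal{M}_1^{-1}\mathcal{K}$ are roots of
\begin{equation*}
(1-\lambda)\bigl[(-1-\lambda)(-\lambda) - \mu^2\bigr] + \sigma^2 \lambda = 0
\end{equation*}
(or the analogous relation, after bookkeeping of signs) where $\mu^2$ ranges over eigenvalues of $Y Y^T = S_2^{-1/2} B S_1^{-1} B^T S_2^{-1/2}$ and $\sigma^2$ over eigenvalues of $X^T X$; crucially, by definition \eqref{eq:S2}, $Y Y^T = I$ identically, so every such $\mu^2$ equals $1$. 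This collapses the cubic to a form independent of $Y$.

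The next step is to pin down the spectrum of the remaining operator $X = S_1^{-1/2} G A_d^{-1/2}$, equivalently of $G A_d^{-1} G^T S_1^{-1}$ (same nonzero eigenvalues as $A_d^{-1} G^T S_1^{-1} G$). Here I would use the extremely sparse structure of the data: by \eqref{eq:G-form}, $G^T$ has a single nonzero block $-I_n/h$ sitting in the ``interface'' slot, and by \eqref{eq:A_d}, $A_{d,21} = -\tfrac{\kappa}{h^2}(0\ \ I_n)$. Consequently $G A_d^{-1} G^T$ is (a scalar multiple of) the trailing $n\times n$ block of $A_d^{-1}$, and $S_1 = A_s + G A_d^{-1} G^T$ differs from $A_s$ only in an $n\times n$ corner. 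Because $A_s$ itself, as described around \eqref{eq:As}, is block upper triangular with $A_{11}$ decoupled from the $(A_{22},A_{23},A_{32},A_{33})$ system, and $G A_d^{-1} G^T$ lives entirely in the $A_{22}$ slot, one gets that $A_s^{-1} G A_d^{-1} G^T$ has rank at most $n$, hence at most $n$ of the singular-value parameters $\sigma^2$ are nonzero. This is the mechanism behind the ``at most $n$'' clauses. For the exact multiplicities in (i)--(iii): the $\sigma^2 = 0$ case makes the cubic factor as $(1-\lambda)(\lambda^2+\lambda-1) = 0$, giving $\lambda = 1$ and $\lambda = \frac{-1\pm\sqrt5}{2}$; counting dimensions of the relevant kernels (using Lemma~\ref{lem:rank-prop-B} for the nullity $(n-1)^2$ associated with $\bar B$, which governs the $-1$ eigenspace coming from the pressure block where the Darcy coupling is absent) yields the stated counts $n^2-n$, $(n-1)^2$, $n^2-n$, $n^2-n$, which I would check sum correctly to $4n^2-n$ together with the at-most-$2n$ remaining eigenvalues.

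Finally, for parts (a) and (b): the nonunit, ``perturbed'' eigenvalues are the roots of the cubic $(1-\lambda)(\lambda^2+\lambda-1) + \sigma^2\lambda = 0$ for the $\le n$ values $\sigma^2 > 0$. I would analyze this cubic as a function of the real parameter $\sigma^2 \ge 0$: it has three real roots for each $\sigma^2$ (one can see realness via the symmetrizability already used for $A_s$, extended to $S_1$), and I would track how the three branches move as $\sigma^2$ increases from $0$. One branch leaves $1$ and increases (these are the $\le n$ eigenvalues exceeding $1$, giving (a)); one branch leaves $\frac{-1+\sqrt5}{2} \approx 0.618$ and stays in $(0,1)$ but moves off the point $\frac{-1+\sqrt5}{2}$ (giving (b), the $\le n$ eigenvalues in $(0,1)\setminus\{\frac{-1+\sqrt5}{2}\}$); the third branch leaves $\frac{-1-\sqrt5}{2}$ and stays negative. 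A sign-chasing/interlacing argument on the cubic at the test points $\lambda = 0, 1$ confirms the branch locations. The main obstacle I anticipate is the rank/structure bookkeeping: making rigorous the claim that exactly (at most) $n$ of the $\sigma^2$'s are nonzero, and correctly identifying the invariant subspaces so that the algebraic multiplicities in (i)--(iii) come out exactly rather than merely as inequalities — this requires careful use of the block structures \eqref{eq:A_d}, \eqref{eq:As}, \eqref{eq:G-form}, \eqref{eq:B-three-block} and of Lemma~\ref{lem:rank-prop-B}, and is where sign conventions and the nonsymmetry of $A_s$ (hence of $S_1$) make the argument delicate.
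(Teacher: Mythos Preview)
Your approach is genuinely different from the paper's, and more ambitious: the paper proceeds by direct case analysis on the eigenvector components $(x,y,z)$, exhibiting explicit eigenvectors by imposing $y=z=0$, or $x=z=0$, or $x=0$ with $G^Ty=0$, and reading off the eigenvalues and dimensions from the kernel structures of $G$, $G^T$, $B$, and $\bar B$. Your route via congruence and a singular-value parameterization would, if it worked, pin down \emph{all} eigenvalues as roots of a one-parameter family of cubics, which is stronger than what the paper obtains.

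However, there are two genuine gaps that would prevent the argument from going through as written. First, the congruence step requires $S_1^{-1/2}$, but $S_1 = A_s + GA_d^{-1}G^T$ is \emph{not} symmetric: the paper is explicit that $A_s\neq A_s^T$, and the block $A_{12}$ sits above a zero block, so $A_s$ (hence $S_1$) is not even symmetrizable by a single diagonal congruence. You acknowledge the nonsymmetry as ``delicate'' at the end, but without a substitute for $S_1^{-1/2}$ the whole normalization collapses. Second, and more seriously, your reduction to scalar cubics in $(\sigma^2,\mu^2)$ implicitly assumes that $XX^T = S_1^{-1/2}GA_d^{-1}G^TS_1^{-1/2}$ and $Y^TY = S_1^{-1/2}B^TS_2^{-1}BS_1^{-1/2}$ are simultaneously diagonalizable on $\mathbb{R}^{2n^2-n}$. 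There is no reason for these two operators to commute; the structural information in \eqref{eq:G-form} and \eqref{eq:B-three-block} does not force it. Without commutativity you cannot attach a pair $(\sigma^2,\mu^2)$ to each eigenvector, and the cubic $(1-\lambda)(\lambda^2+\lambda-1)+\sigma^2\lambda=0$ is only valid on common invariant subspaces. Note also that this particular cubic never has $-1$ as a root (for $\sigma^2=0$ it gives $-2$ at $\lambda=-1$); the eigenvalue $-1$ arises from the $\mu^2=0$ branch, i.e.\ from $y\in\ker B$, which you allude to only informally.

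The paper sidesteps both issues by never forming square roots and never needing commutativity: it simply restricts to the subspace $\ker G^T$ (on which the troublesome $GA_d^{-1}G^T$ term vanishes so that $S_1^{-1}A_s y=y$), intersects with $\ker B$ or with ${\rm range}(S_1^{-1}B^T)$ as needed, and counts dimensions using Lemma~\ref{lem:rank-prop-B}. The price is that the paper only obtains lower bounds on the multiplicities in (i)--(iii), and only the inequality ``at most $n$'' in (a)--(b); your approach, if the commutativity obstacle could be overcome (or replaced by an explicit invariant-subspace decomposition adapted to the block structure of $G$ and $B$), would yield the full spectrum.
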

\begin{proof}
By direct calculation,
\begin{equation*}
  \mathcal{M}_1^{-1}\mathcal{K}  =\begin{pmatrix}
 I &     A_d^{-1} G^T      & 0  \\ 
S_1^{-1}G    &    -S_1^{-1}A_s   & S_1^{-1}B^T \\
0     &    S_2^{-1} B      &0
\end{pmatrix}.
\end{equation*}
Let $\begin{pmatrix} x^T &  y^T & z^T \end{pmatrix}^T$ be an eigenvector of $ \mathcal{M}_1^{-1}\mathcal{K}$ associated with eigenvalue $\lambda$, that is
\begin{equation*} 
\begin{pmatrix}
 I &     A_d^{-1} G^T      & 0  \\ 
S_1^{-1}G    &    -S_1^{-1}A_s   & S_1^{-1}B^T \\
0     &    S_2^{-1} B      &0
\end{pmatrix} \begin{pmatrix}
 x \\ y\\z
  \end{pmatrix}
  =\lambda  \begin{pmatrix}
 x \\ y\\z
  \end{pmatrix}.
\end{equation*}
We thus have
\begin{subequations}
\label{eq:eigM1}
\begin{align}
x +A_d^{-1}G^T y &=  \lambda x, \label{eq:eig-express-1-P3}\\
S_1^{-1}G x -S_1^{-1}A_s y+S_1^{-1}B^T z&=\lambda y, \label{eq:eig-express-2-P3}\\
 ( B S_1^{-1}B^T)^{-1} B y&= \lambda z. \label{eq:eig-express-3-P3}
\end{align}
\end{subequations}
(i) {\underline {eigenvalue $\lambda=1$}}: When $y=z=0$, \eqref{eq:eigM1} is reduced to
\begin{align*}
x &=  \lambda x,\\
S_1^{-1}Gx &=0, 
\end{align*}
which means that $\lambda=1$ is an eigenvalue of $ \mathcal{M}_1^{-1}\mathcal{K}$  with $G x=0$. Since the null space of $G$ has dimension $n^2-n$, see \eqref{eq:G-form}, $\lambda=1$ is an eigenvalue with multiplicity $n^2-n$.\\
 
 \noindent
(ii) {\underline {eigenvalue $\lambda=-1$}}:  If $x=z=0$, then \eqref{eq:eigM1} is reduced to
\begin{subequations}
\label{eq:eigM1lam1}
\begin{align}
A_d^{-1}G^T y &= 0, \label{eq:eig-express-2-P3-1} \\
-S_1^{-1}A_s y &=\lambda y, \label{eq:eig-express-2-P3-2} \\
 By & =0. \label{eq:eig-express-2-P3-3}
\end{align}
\end{subequations}
We have $A_s=S_1-GA_d^{-1}G^T$. Using \eqref{eq:eig-express-2-P3-1}, we rewrite \eqref{eq:eig-express-2-P3-2} as
\begin{equation*}
-S_1^{-1}(S_1-GA_d^{-1}G^T)y  =-y +0 =\lambda y,
\end{equation*} 
which means that $\lambda=-1$. Next we prove that such $y \ne 0$ exists.   From  \eqref{eq:eig-express-2-P3-1}  and \eqref{eq:G-form}, we see that $y$ has the following structure
\begin{equation*}
y= \begin{pmatrix}
y_1 \\   0  \\  y_2
 \end{pmatrix},
\end{equation*} 
where $y_1$ and $y_2$ can have any value, as long as they are not simultaneously zero. Now, we consider \eqref{eq:eig-express-2-P3-3}. Then, $y_1, y_2$ satisfy 
$\bar{B}\begin{pmatrix} y_1^T &  y_2^T \end{pmatrix}^T=0$  (see \eqref{eq:Bbar}). From  Lemma \ref{lem:rank-prop-B} we know that the nullity of $\bar{B}$ is $(n-1)^2$, which  is the multiplicity of the  eigenvalue $-1$. \\ 
 
 \noindent
(iii) {\underline {eigenvalues $\lambda=\frac{-1\pm\sqrt{5}}{2}$}}: If $x=0, y\neq 0, z \neq 0$, then \eqref{eq:eigM1} is reduced to 
\begin{subequations}
\label{eq:eigM1golden}
\begin{align}
 A_d^{-1}G^T y &=0, \label{eq:eig-express-3-P3-1}\\
  -S_1^{-1}A_s y +S_1^{-1}B^T z&=\lambda y, \label{eq:eig-express-3-P3-2}\\
 ( B S_1^{-1}B^T)^{-1} B y&= \lambda z. \label{eq:eig-express-3-P3-3}
\end{align}
\end{subequations}
Using  $A_s=S_1-GA_d^{-1}G^T$ and \eqref{eq:eig-express-3-P3-1}, we rewrite \eqref{eq:eig-express-3-P3-2} as
\begin{equation*}
-S_1^{-1}(S_1-GA_d^{-1}G^T) y+S_1^{-1}B^T z =-y+S_1^{-1}B^T z=\lambda y, 
\end{equation*} 
which gives  $y=\frac{1}{1+\lambda} S_1^{-1}B^T z$.  Substituting  $y$ into \eqref{eq:eig-express-3-P3-3} gives
\begin{equation*}
 (B S_1^{-1}B^T)^{-1} B y = \frac{1}{1+\lambda} (B S_1^{-1}B^T)^{-1} B S_1^{-1}B^T z= \frac{1}{1+\lambda} z= \lambda z. 
\end{equation*} 
It follows that $\frac{1}{1+\lambda}=\lambda$. Then we have $\lambda=\frac{-1\pm\sqrt{5}}{2}$.   From \eqref{eq:eig-express-3-P3-1}  we have $G^T y =0$, which means  we have a set of  $n^2-n$ linearly independent vectors $y$ here. It follows that the pair of eigenvalues  $\frac{-1\pm\sqrt{5}}{2}$ have  multiplicity  $n^2-n$ each.
Next, we prove that the number of eigenvalues that satisfy $\lambda>1$ is at most $n$. From \eqref{eq:eig-express-1-P3}, we have
\begin{equation}\label{eq:x=formy}
x=\frac{1}{\lambda-1}A_d^{-1}G^Ty.
\end{equation} 
We claim that $G^Ty \neq 0$. This can be shown by contradiction, as follows. If $G^Ty =0$, from \eqref{eq:eig-express-1-P3}, we would have $x=0$. At this point, if $z=0$, then from the proof of (ii) it would follow that  $\lambda=-1$, which contradicts our assumption that $\lambda>1$. So $z\neq 0$. If $y \neq 0$, from the proof of  (iii), we would have  $\lambda=\frac{-1\pm\sqrt{5}}{2}$, which contradicts our assumption that $\lambda>1$.  So $y=0$. However, this leads to $z=0$, which is a contradiction. Thus,   $G^Ty \neq 0$, that is, $y \not\in {\rm ker}(G^T)$. Since  rank$(G^T)=n$, there are at most $n$ such linearly independent vectors $y$.  From \eqref{eq:eig-express-3-P3}, we have
 \begin{equation*}
z=(\lambda B S_1^{-1}B^T)^{-1}By.
\end{equation*} 
So the space spanned by the eigenvectors  $\begin{pmatrix} x^T &  y^T & z^T \end{pmatrix}^T$ has dimension at most $n$. 

Next, we claim that there are $n^2$ eigenvalues in the interval $(0,1)$. Substituting \eqref{eq:x=formy} into \eqref{eq:eig-express-2-P3} and solving for $y$ gives
\begin{equation*}
y =\left(\frac{1}{1-\lambda} GA_d^{-1}G^T +\lambda S_1+A_s \right)^{-1}B^Tz
\end{equation*} 
Since $B^T$ is full rank,  it follows that $z\neq 0$; otherwise, $y=x=0$. Thus, $z$ is in the range of $B^T$. Note that $B^T$ has rank $n^2$.
The space spanned by the eigenvectors $\begin{pmatrix} x^T &  y^T & z^T \end{pmatrix}^T$ has dimension at most $n^2$.  
 From (iii), we know that $\frac{-1+\sqrt{5}}{2}$ has multiplicity $n^2-n$, so the number of eigenvalues in $(0,1)\setminus \{ \frac{-1+\sqrt{5}}{2} \}$ is at most $n^2-(n^2-n)=n$.
%
\end{proof}
\begin{remark} For symmetric block diagonal preconditioners applied to symmetric double saddle-point systems, spectral studies provide results on the boundedness away from zero of all the eigenvalues of the preconditioned matrices; see, e.g., \cite[Theorem 3.3]{bradley2021eigenvalue}. In Theorem~\ref{thm:eigs-M1K} we do not know the location of  $2n-1$ of the $4n^2-n$ eigenvalues. 
\end{remark}

\begin{theorem} \label{thm:M2K}
The eigenvalues of $ \mathcal{M}_2^{-1}\mathcal{K}$ are 
\begin{enumerate}[(i)]
\item $1$ with multiplicity $n^2$;
\item  $-1$ with multiplicity $n^2-n$; 
\item $\frac{-1\pm \sqrt{5}}{2}$ with multiplicities $n^2$ each.
\end{enumerate}
\end{theorem}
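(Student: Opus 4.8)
## Proof Strategy for Theorem 4.3

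The plan is to mimic the structure of the proof of Theorem~\ref{thm:eigs-M1K}, exploiting the fact that $\mathcal{M}_2$ differs from $\mathcal{M}_1$ only in the $(2,1)$ block, where the interface matrix $G$ has been inserted. First I would compute $\mathcal{M}_2^{-1}\mathcal{K}$ explicitly. Writing $\mathcal{M}_2$ as the product of the block-diagonal matrix $\mathrm{diag}(A_d, S_1, S_2)$ with a unit lower-triangular factor having $(2,1)$ block $S_1^{-1}G A_d^{-1}\cdot A_d = S_1^{-1}G$ (or more directly inverting the block lower-triangular structure by forward substitution), one finds that the $(2,1)$ and $(2,2)$ blocks of $\mathcal{M}_2^{-1}\mathcal{K}$ simplify: since $\mathcal{K}$ has $(1,1)$ block $A_d$ and $(2,1)$ block $G$, the subtraction $G - G A_d^{-1} A_d = 0$ annihilates the $(2,1)$ entry after the forward sweep, and the $(2,2)$ block becomes $S_1^{-1}(-A_s - G A_d^{-1} G^T) = -S_1^{-1}S_1 = -I$ using the definition \eqref{eq:S1}. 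So I expect
\begin{equation*}
\mathcal{M}_2^{-1}\mathcal{K} = \begin{pmatrix} I & A_d^{-1}G^T & 0 \\ 0 & -I & S_1^{-1}B^T \\ 0 & S_2^{-1}B & 0 \end{pmatrix},
\end{equation*}
a considerably cleaner operator than $\mathcal{M}_1^{-1}\mathcal{K}$ because the $(2,1)$ block is now zero.

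Next I would read off the eigenvalues from this block-upper-Hessenberg form. Let $(x^T, y^T, z^T)^T$ be an eigenvector for eigenvalue $\lambda$; the equations are $x + A_d^{-1}G^T y = \lambda x$, $-y + S_1^{-1}B^T z = \lambda y$, and $S_2^{-1}B y = \lambda z$. For $\lambda = 1$: taking $y = 0$, $z = 0$ forces only $x + A_d^{-1}G^T\cdot 0 = x$, so every $x$ works, giving multiplicity $n^2$ (the full dimension of the $\phi$-block); I should check no further $\lambda=1$ eigenvectors with $y \neq 0$ are missed, but the count $n^2$ is forced by the structure. For $\lambda = -1$: set $x = 0$, which by the first equation forces $G^T y = 0$; the second equation gives $-y + S_1^{-1}B^T z = -y$, so $B^T z = 0$, hence $z = 0$ since $B$ has full rank (property~3 in Section~\ref{sec:properties}); the third equation gives $B y = 0$. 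So $y \in \ker(G^T) \cap \ker(B)$. Here $G^T y = 0$ means $y$ has the form $(y_1, 0, y_2)^T$ as in the proof of Theorem~\ref{thm:eigs-M1K}, and $By = 0$ then reduces to $\bar B (y_1^T, y_2^T)^T = 0$ — wait, I need to recheck: in Theorem~\ref{thm:eigs-M1K} this same computation gave multiplicity $(n-1)^2$, not $n^2-n$, so the $\lambda=-1$ analysis here must differ. The difference is that for $\mathcal{M}_2$ the $(2,2)$ block is $-I$ rather than $-S_1^{-1}A_s$, so the constraint from the second equation is $B^T z = 0$ rather than the $A_s$-weighted relation; combined with $x=0 \Rightarrow G^T y = 0$ and leaving $z$ free when $\lambda \ne -1$... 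I would need to carefully separate the cases $x=0$ versus $x \ne 0$ and track which of the three block equations become vacuous.

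For the golden-ratio eigenvalues $\lambda = \frac{-1\pm\sqrt 5}{2}$: taking $x = 0$ forces $G^T y = 0$ (so $y$ ranges over an $(n^2-n)$-dimensional space, as rank$(G^T) = n$), the second equation gives $y = \frac{1}{1+\lambda}S_1^{-1}B^T z$, and substituting into the third yields $\frac{1}{1+\lambda} z = \lambda z$, so $\lambda^2 + \lambda - 1 = 0$ — but this only accounts for $n^2 - n$ multiplicity, whereas the claim is $n^2$. So I must also find golden-ratio eigenvectors with $x \ne 0$: then $x = \frac{1}{\lambda - 1}A_d^{-1}G^T y$ from the first equation, and the remaining two equations (which do not involve $x$ at all, since the $(2,1)$ and $(3,1)$ blocks of $\mathcal{M}_2^{-1}\mathcal{K}$ vanish) give the same scalar relation $\lambda^2+\lambda-1 = 0$ for any $y$ with $B^T z$ appropriately matched. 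The extra freedom from the $y \notin \ker(G^T)$ directions (rank $n$) bumps the count up by $n$, yielding $n^2$. The main obstacle I anticipate is getting the multiplicity bookkeeping exactly right — in particular, confirming that the eigenvectors counted in the $x = 0$ and $x \ne 0$ cases are genuinely linearly independent and that the four eigenvalue families $\{1, -1, \frac{-1+\sqrt5}{2}, \frac{-1-\sqrt5}{2}\}$ with multiplicities $\{n^2, n^2-n, n^2, n^2\}$ sum to $4n^2 - n$, which they do. A clean way to organize this and avoid double-counting is to decompose $\mathbb{R}^{n^2}$ for the $y$-component into $\ker(G^T)$ (dimension $n^2-n$) and a complementary $n$-dimensional subspace, handle each eigenvalue on each piece, and verify the dimension tally matches.
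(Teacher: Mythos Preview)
Your computation of $\mathcal{M}_2^{-1}\mathcal{K}$ is correct and matches the paper, and the overall strategy is the same as the paper's. However, you are making the analysis considerably harder than necessary, and there are dimension errors that would need to be fixed before the argument closes.

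The key simplification you are missing is this: once you have
\[
\mathcal{M}_2^{-1}\mathcal{K} = \begin{pmatrix} I & A_d^{-1}G^T & 0 \\ 0 & -I & S_1^{-1}B^T \\ 0 & S_2^{-1}B & 0 \end{pmatrix},
\]
the second and third block equations do not involve $x$ at all. Hence the eigenvalue problem for $(y,z)$ is completely decoupled from $x$, and the first equation $x + A_d^{-1}G^T y = \lambda x$ merely determines $x$ uniquely from $y$ whenever $\lambda \ne 1$ (namely $x = (\lambda-1)^{-1}A_d^{-1}G^T y$), or leaves $x$ free when $\lambda = 1$. There is therefore \emph{no need} to split into cases $x = 0$ versus $x \ne 0$; you never need the condition $G^T y = 0$ anywhere in this proof. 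This is precisely where Theorem~\ref{thm:M2K} is cleaner than Theorem~\ref{thm:eigs-M1K}. For $\lambda=-1$ the second equation forces $B^T z=0$, hence $z=0$, and the third gives $By=0$; the nullity of $B$ is $(2n^2-n)-n^2=n^2-n$, which is the multiplicity. For $\lambda\ne -1$ the second and third equations yield $\lambda^2+\lambda-1=0$ with $z\ne 0$ free in $\mathbb{R}^{n^2}$, giving multiplicity $n^2$ directly.

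Your closing bookkeeping also contains genuine errors: $y$ lives in $\mathbb{R}^{2n^2-n}$, not $\mathbb{R}^{n^2}$, and consequently $\ker(G^T)$ has dimension $(2n^2-n)-n = 2n^2-2n$, not $n^2-n$. These errors stem from the unnecessary $x$-case-split; once you drop that, the dimension of $\ker(G^T)$ never enters the argument.
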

\begin{proof}
It can be shown that
\begin{equation*}
 \mathcal{M}_2^{-1} =\begin{pmatrix}
 A_d^{-1} &     0       & 0  \\ 
-S_1^{-1} G A_d^{-1}      &    S_1^{-1}    & 0 \\
0     &     0       & S_2^{-1} 
\end{pmatrix},
\end{equation*}
 and  it follows that
 \begin{equation*}
 \mathcal{M}_2^{-1}\mathcal{K} =\begin{pmatrix}
 I &     A_d^{-1} G^T      & 0  \\ 
0    &    -I    & S_1^{-1}B^T \\
0     &    S_2^{-1} B      &0
 \end{pmatrix}.
\end{equation*}
Let $\begin{pmatrix} x^T &  y^T & z^T \end{pmatrix}^T$ be an eigenvector of $ \mathcal{M}_2^{-1}\mathcal{K}$ associated with eigenvalue $\lambda$, that is,
 \begin{equation*} 
\begin{pmatrix}
 I &     A_d^{-1} G^T      & 0  \\ 
0    &    -I    & S_1^{-1}B^T \\
0     &    S_2^{-1} B      &0
 \end{pmatrix}
 \begin{pmatrix}
 x \\ y\\z
  \end{pmatrix}
  =\lambda   \begin{pmatrix}
 x \\ y\\z
  \end{pmatrix}.
\end{equation*}
We rewrite the above  as
\begin{subequations}
\begin{align}
x +A_d^{-1}G^Ty &=  \lambda x, \label{eq:eig-express-1}\\
-y+S_1^{-1}B^Tz&=\lambda y, \label{eq:eig-express-2}\\
 ( B S_1^{-1}B^T)^{-1} B y&= \lambda z. \label{eq:eig-express-3}
\end{align}
\end{subequations}
It is obvious that   $\begin{pmatrix} x^T &  y^T & z^T \end{pmatrix}^T=\begin{pmatrix} x^T  &  0  & 0 \end{pmatrix}^T$ where $x \neq 0$ is an eigenvector of $ \mathcal{M}_2^{-1}\mathcal{K}$ with  $\lambda =1$.  Since $x\in \mathbb{R}^{n^2\times 1}$,  we have that $\lambda=1$ is an eigenvalue with multiplicity $n^2$.

If $\lambda=-1$ and $y \neq 0$, from \eqref{eq:eig-express-2} we have $S_1^{-1}B^Tz=0$. It follows that $B^T z=0$. Since $B^T$ has full rank, $z=0$.
From \eqref{eq:eig-express-3}, we have $By=0$. Since  $B \in \mathbb{R}^{n^2 \times (2n^2-n)}$ has rank $n^2$, the  null space of  $B$  has dimension $2n^2-n-n^2=n^2-n$.

If $\lambda  \neq -1$,  from \eqref{eq:eig-express-2} we have  $B y = \frac{1}{1+\lambda} B S_1^{-1}B^T z$. Using \eqref{eq:eig-express-3}, we have
$\frac{1}{1+\lambda} z= \lambda z.$
Thus,  $z\neq 0$ and  
$\lambda^2+\lambda -1=0,$
that is, $\lambda =\frac{-1\pm \sqrt{5}}{2}$. Since $ z \neq 0 \in \mathbb{R}^{n^2\times 1}$, the eigenvalue  $-1$ has multiplicity $n^2$.
\end{proof}

Finally,  the spectrum of the preconditioned matrix associated with $ \mathcal{M}_3$ is given as follows.

\begin{theorem} 
All of the eigenvalues of $ \mathcal{M}_3^{-1}\mathcal{K}$ are $1$, and the minimal polynomial of this preconditioned matrix is $p(z)=(z-1)^3$.
\label{thm:M3K}
\end{theorem}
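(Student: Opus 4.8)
The plan is to exploit the fact that $\mathcal{M}_3 = LD$ is precisely the left factor in the block decomposition $\mathcal{K} = (LD)\,U$ displayed in \eqref{eq:block_K}. Therefore $\mathcal{M}_3^{-1}\mathcal{K} = U$, where
\begin{equation*}
U = \begin{pmatrix}
 I &    A_d^{-1}G^T     & 0  \\
 0  &     I                  & -S_1^{-1}B^T \\
0 &     0          & I
 \end{pmatrix}.
\end{equation*}
Since $U$ is block upper triangular with identity blocks on the diagonal, every eigenvalue equals $1$; this settles the first assertion immediately. The only genuine content is the claim about the minimal polynomial.

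To determine the minimal polynomial, I would compute powers of $N := U - I$, the strictly block-upper-triangular nilpotent part, and find the smallest $k$ with $N^k = 0$. Writing $N = \begin{pmatrix} 0 & P & 0 \\ 0 & 0 & Q \\ 0 & 0 & 0 \end{pmatrix}$ with $P = A_d^{-1}G^T$ and $Q = -S_1^{-1}B^T$, a routine block multiplication gives $N^2 = \begin{pmatrix} 0 & 0 & PQ \\ 0 & 0 & 0 \\ 0 & 0 & 0 \end{pmatrix}$ and $N^3 = 0$. Hence the minimal polynomial divides $(z-1)^3$. To show it is exactly $(z-1)^3$ and not a proper divisor, it suffices to exhibit that $N^2 \neq 0$, i.e. that $PQ = -A_d^{-1}G^T S_1^{-1}B^T \neq 0$. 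Since $A_d$ is symmetric positive definite (Dirichlet case) and $S_1^{-1}$ is invertible, and since $G^T$ has rank $n$ while $B^T$ has rank $n^2$ (full rank), the product $G^T S_1^{-1} B^T$ is nonzero: one can check directly from the explicit sparsity patterns in \eqref{eq:G-form} and \eqref{eq:B-three-block} that $G^T S_1^{-1} B^T$ does not vanish, for instance because $B^T$ maps onto a space that $S_1^{-1}$ and then $G$ (a nonzero matrix) cannot annihilate. It also follows that $N \neq 0$, ruling out $(z-1)^2$ and $(z-1)$ as the minimal polynomial.

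The main obstacle, such as it is, is the verification that $N^2 \neq 0$ — equivalently that $A_d^{-1}G^T S_1^{-1}B^T$ does not happen to be the zero matrix. This is not difficult but does require pinning down that the ranges and kernels of the explicit blocks $G$, $B$, $S_1$ interact nontrivially; one clean way is to note that if $A_d^{-1}G^T S_1^{-1}B^T = 0$ then, since $A_d^{-1}$ is invertible, $G^T S_1^{-1} B^T = 0$, so the range of $S_1^{-1}B^T$ would lie in $\ker(G^T)$; but $\ker(G^T)$ has dimension $2n^2 - n$ only when $G^T$ has rank... wait — more directly, $\dim\ker(G^T) = (2n^2-n) - n = 2n^2-2n$, whereas $S_1^{-1}B^T$ has rank $n^2 > 2n^2 - 2n$ fails in general, so instead one argues: $B^T S_1^{-1}$-image has dimension $n^2$, and its intersection with $\ker(G^T)$ cannot be everything unless a dimension count permits, which for generic $n$ it does not — so a direct sparsity-pattern argument is the safest route. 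I would simply compute one nonzero entry of $G^T S_1^{-1} B^T$ using the structure $G^T = \begin{pmatrix} 0 & 0 & 0 \\ 0 & -I_n/h & 0\end{pmatrix}$ and $B = (B_x \; B_0 \; B_y)$ with $B_0 = \begin{pmatrix} I_n/h \\ 0\end{pmatrix}$, reducing the question to showing a certain $n\times n$ subblock of $S_1^{-1}$ is nonzero, which is immediate since $S_1^{-1}$ is invertible. This completes the proof that the minimal polynomial is $(z-1)^3$.
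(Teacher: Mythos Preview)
Your approach is exactly the paper's: observe that $\mathcal{M}_3 = LD$ in the factorization \eqref{eq:block_K}, whence $\mathcal{M}_3^{-1}\mathcal{K} = U$, a unit block upper-triangular matrix whose eigenvalues are all $1$. The paper's own proof is the single line ``the result follows immediately since $\mathcal{M}_3^{-1}\mathcal{K}=(LD)^{-1}LDU=U$'' and offers no separate verification of the minimal-polynomial claim.

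Your extra work on the minimal polynomial is therefore more than the paper provides, but the argument for $N^2 \neq 0$ has a gap. The final step --- ``a certain $n\times n$ subblock of $S_1^{-1}$ is nonzero, which is immediate since $S_1^{-1}$ is invertible'' --- is a non sequitur: an invertible matrix can have zero subblocks (any nontrivial permutation matrix does). What is actually needed is that the image of $S_1^{-1}B^T$ is not contained in $\ker(G^T)$, and your own dimension count shows this cannot be settled by dimensions alone, since $\dim\ker(G^T)=2n^2-2n \ge n^2 = \operatorname{rank}(S_1^{-1}B^T)$ for $n\ge 2$. A rigorous verification requires exploiting the explicit MAC block structure of $S_1$, $B$, and $G$; for instance, one can reduce it to checking that $B_0^T \neq A_{23}A_{33}^{-1}B_y^T$ and then inspect the stencils directly. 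The paper sidesteps this entirely, tacitly relying on the generic three-level block structure of $U$ (and, implicitly, on the numerical confirmation that preconditioned GMRES takes three iterations rather than two).
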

\begin{proof}
Using the notation of~\eqref{eq:block_K}, the result follows immediately since $ \mathcal{M}_3^{-1}\mathcal{K}=(LD)^{-1}LDU=U$
\end{proof}

\subsection{Approximations of the Schur complements}\label{sub:schur-approximation}

The  choices $\mathcal{M}_1, \mathcal{M}_2,$ and $\mathcal{M}_3$ as preconditioners  are too computationally costly to work with in practice, so we seek effective approximations. Specifically, in order to make the solver practical, we investigate the structure of the Schur complements $S_1$ and $S_2$, and derive approximations that are easier to compute and invert.

 \subsubsection{Approximations of $S_1$}
 \label{sec:approxS1}
To find good approximations of $S_1$ in~\eqref{eq:S1}, we seek approximations for the action of its additive components, namely $A_s$  and $GA_d^{-1}G^T$. 
Given the sparsity structure of $G^T$, \eqref{eq:G-form}, it follows that $GA_d^{-1}G^T$ is given by
\begin{equation}\label{eq:GAd^{-1}G-approx}
\quad GA_d^{-1}G^T =
 \begin{pmatrix}
 0  &     0   & 0  \\ 
0   &    T      & 0 \\
0 &      0              & 0 
 \end{pmatrix},
\end{equation} 
where $T$ is an $n \times n$ matrix, to be approximated.

Our first (naive) approximation is to take a scaled identity. To that end, we take the diagonal approximation $\left({\rm diag}(A_d)\right) ^{-1}\approx A_d^{-1}$ and ignore the corrections near the boundaries: $T \approx \frac{\tau}{\kappa} I_n$ with $\tau=\frac13$.
The resulting approximation of $S_1$ is
\begin{equation}
\tilde{S}_1 = 
 \begin{pmatrix}
 A_{11}  &    A_{12}  & 0  \\ 
0   &   A_{22}+ \frac{\tau}{\kappa} I_n      &  A_{23} \\
0&      A_{32}              & A_{33} 
 \end{pmatrix}.
\label{eq:tildeS1}
\end{equation}
 In our numerical experiments we have found that this simple approach is effective for a limited range of the  parameters $\kappa$, $\nu$, and $h$. It is thus necessary to consider a more sophisticated alternative, as we do next.

Suppose the Cholesky decomposition of $A_d$ is given by
$$ A_d = F F^T, $$
and let 
$ G A_d^{-1} G^T = W^T W$, where $W=F^{-1}G^T.$ Taking the block structure of $G^T$ into consideration, we partition $F$ as follows: 
\begin{equation*}
F = \begin{pmatrix}
 F_{11} &    0    \\ 
 F_{21} &    F_{22}  
 \end{pmatrix},
 \end{equation*}
where $F_{11} \in {\mathbb R}^{(n^2-n) \times (n^2-n)}$ and $F_{22} \in {\mathbb R}^{n \times n}$.
It readily follows that 
\begin{equation*}
W =\begin{pmatrix}
0 &     0  & 0  \\ 
0   &     F_{22}^{-1} / h   & 0
 \end{pmatrix}
\end{equation*} 
and $$ T=(F_{22}^{-T} F_{22}^{-1}) / h^2, $$
where $F_{22}$ is an $n \times n$  lower triangular matrix.

In practice, since the Cholesky factorization is too expensive to compute, we compute an incomplete Cholesky factorization of $A_d$ with a moderate drop tolerance. We then replace $F_{22}$ by  the corresponding incomplete factor, which we  denote by $\tilde{F}_{22}$. 

Using the above approach, we denote the corresponding approximation to $S_1$ as
\begin{equation}
\hat{S}_1 = 
 \begin{pmatrix}
 A_{11}  &    A_{12}  & 0  \\ 
0   &   A_{22}+  (\tilde{F}_{22}^{-T} \tilde{F}_{22}^{-1}) / h^2    &  A_{23} \\
0&      A_{32}              & A_{33} 
 \end{pmatrix}.
\label{eq:hatS1}
\end{equation}
We have found this approach to be robust with respect to $\kappa$, $\nu$, and $h$; see Section~\ref{sec:numerical}.

 \subsubsection{Approximation of $S_2$}
 \label{sec:approxS2}
Recall from ~\eqref{eq:S2} that $S_2= B S_1^{-1} B^T$. Consider $\tilde{S}_1$ of \eqref{eq:tildeS1}, and let us further sparsify it as follows: we keep the block diagonal part of $\tilde{S}_1$ and $A_{23}$, which contains important information about the interface, and drop the off-diagonal blocks $A_{12}$ and $A_{32}$. We further replace the $(2,2)$ block of the approximation $\tilde{S}_1$ by its diagonal part:
\begin{equation*}
 \widetilde{A}_{22}=\frac{2\nu}{h^2} I_n+\frac{\tau}{\kappa}I_n.
\end{equation*} 
We then use this as a sparser  approximation of $S_1$:
\begin{equation*}
\widecheck{S}_1 = 
 \begin{pmatrix}
 A_{11}  &    0  & 0  \\ 
0   &   \widetilde{A}_{22}      &  A_{23} \\
0&      0            & A_{33} 
 \end{pmatrix}.
 \end{equation*}
 Then we have
\begin{align*}
B \widecheck{S}_1^{-1}B^T & \approx \begin{pmatrix}
B_x &  B_0  & B_y
 \end{pmatrix}
 \begin{pmatrix}
 A_{11}^{-1} &     0   & 0  \\ 
0   &       \widetilde{A}_{22}^{-1}       & -\widetilde{A}_{22}^{-1} A_{23}A_{33}^{-1} \\
0 &      0              & A_{33}^{-1}
 \end{pmatrix} 
\begin{pmatrix}
B_x^T\\
B_0^T\\
B_y^T
 \end{pmatrix} \\
 &=  B_x A_{11}^{-1}B_x^T +B_y A_{33}^{-1}B_y^T+ B_0 \widetilde{A}_{22}^{-1}B_0^T  +B_0 \widetilde{A}_{22}^{-1}A_{23}A_{33}^{-1}B_y^T.
\end{align*}
The matrix $B_x A_{11}^{-1}B_x^T + B_y A_{33}^{-1}B_y^T$ can be approximated by a scaled identity, since in the MAC discretization we have that $B_x B_x^T$ and $B_y B_y^T$ are scaled Laplacians.  In fact,
 \begin{equation*}
B_x A_{11}^{-1}B_x^T +B_y A_{33}^{-1}B_y^T \approx \frac{1}{\nu}I_{n^2-n}.
\end{equation*} 

Then,  
\begin{equation*}
 B_0 \widetilde{A}_{22}^{-1} B_0^T  =\begin{pmatrix} I_n/h  \\ 0 \end{pmatrix} 
\left( \frac{2\nu}{h^2} I_n+\frac{\tau}{\kappa}I_n \right)^{-1} \begin{pmatrix} I_n/h  & 0 \end{pmatrix}= \begin{pmatrix}
 \frac{\kappa}{2 \nu \kappa+h^2 \tau}I_n &     0        \\ 
0     &    0
\end{pmatrix}.
\end{equation*}
 Further, we have
\begin{align*}
B_0 \widetilde{A}_{22}^{-1} A_{23}A_{33}^{-1}B_y^T
 & = \begin{pmatrix} I_n/h  \\ 0 \end{pmatrix} \left( \frac{2\nu}{h^2}  I_n+\frac{\tau}{\kappa}I _n\right)^{-1}  \begin{pmatrix}-\frac{2\nu}{h^2} I_n  &  0 \end{pmatrix} A_{33}^{-1}B_y^T \\
  & = \begin{pmatrix}
 -\frac{2\nu \kappa}{h(2 \nu \kappa+h^2 \tau)} I_n &     0        \\ 
0     &    0
\end{pmatrix} A_{33}^{-1}B_y^T.
 \end{align*} 
This matrix contains entries that are smaller by a factor of $h$ than $ B_0 \widetilde{A}_{22}^{-1} B_0^T$ and therefore we drop it and do not incorporate it into the approximation.

Based on the above, we approximate $S_2$ by
\begin{equation}
\widehat{S}_2 = \frac{1}{\nu} I_{n^2-n} +\begin{pmatrix}
 \frac{\kappa}{2 \nu \kappa+h^2 \tau} I_n &     0        \\ 
0     &    0
\end{pmatrix}=\begin{pmatrix}
 \frac{3\nu \kappa+h^2\tau}{\nu(2 \nu \kappa+h^2 \tau)} I_n &     0        \\ 
0     &     \frac{1}{\nu} I_{n^2-2n} 
\end{pmatrix}.
\label{eq:hatS2}
\end{equation}

\subsubsection{Practical block preconditioners}
Based on the discussion in Subsections \ref{sec:approxS1} and \ref{sec:approxS2},  for our numerical experiments we will consider mostly the following block preconditioners:
\begin{equation*}
 \mathcal{\widehat{M}}_1 =\begin{pmatrix}
 A_d &     0       & 0  \\ 
0    &    -\widehat{S}_1    & 0 \\
0     &     0       & \widehat{S}_2
 \end{pmatrix}, \quad
 \mathcal{\widehat{M}}_2 =\begin{pmatrix}
 A_d &     0       & 0  \\ 
G    &    -\widehat{S}_1    & 0 \\
0     &     0       & \widehat{S}_2
 \end{pmatrix}, \quad 
  \mathcal{\widehat{M}}_3=  \begin{pmatrix}
 A_d &     0       & 0  \\ 
G     &     -\widehat{S}_1    & 0 \\
0     &     B        & \widehat{S}_2
 \end{pmatrix},
 \end{equation*}
where $\widehat{S}_1$ and $\widehat{S}_2$ are given by \eqref{eq:hatS1} and \eqref{eq:hatS2}, respectively.

 \section{Numerical experiments} \label{sec:numerical}
We consider three numerical examples. The first two are taken from \cite{shiue2018convergence}, but with a different formulation of the BJS condition.  We use those examples to perform an error validation and confirm that we observe the expected order of the error. These two examples  impose specific constraints on the values of the physical parameters $\nu,\kappa$. 
 
 We then move to consider a third example from \cite{luo2017uzawa}, where there is no restriction on the physical parameters; this allows us to investigate the convergence behavior of our solver for  a broad range of the parameters. As explained in Section \ref{sec:preconditioner}, we assume Dirichlet boundary conditions in all our examples. Our code is written in {\sc {Matlab}}.

The dimensions of the linear systems used in our numerical experiments are given in Table \ref{tab:dimensions}.
\begin{table} 
 \caption{Values of $n$ and the dimensions of the corresponding linear systems}
\centering
\begin{tabular}{cc}
\hline
$n$     & dimensions          \\ \hline     
32	& 4,064  \\
64	& 16,320  \\
128	& 65,508  \\
256	& 261,888 \\
512	& 1,048,064  \\
1024  & 4,193,280 \\
 \hline
\end{tabular}\label{tab:dimensions}
\end{table}   
 
  {\bf Example 1}:  We take $\Omega_s=[0,1]\times [1,2]$ and $\Omega_d=[0,1]\times [0,1]$. The analytical solution is given by
   \begin{align*}
 u& = -\frac{1}{\pi} e^y\sin(\pi x),\\
 v& =(e^y-e)\cos(\pi x), \\
 p&=2e^y\cos(\pi x),\\
 \phi&=(e^y-ye)\cos(\pi x). 
 \end{align*}
 The interface equations \eqref{eq:interface_conditions} require that $\alpha=\nu=1$. 

 {\bf Example 2}:  We consider $\Omega_s=[0,1]\times [1,2]$ and $\Omega_d=[0,1]\times [0,1]$. The analytical solution is given by
 \begin{align*}
 u& =(y-1)^2+x(y-1)+3x-1,\\
 v& =x(x-1)-0.5(y-1)^2-3y+1, \\
 p&=2x+y-1,\\
 \phi&=x(1-x)(y-1)+\frac{(y-1)^3}{3}+2x+2y+4. 
 \end{align*}
 By \eqref{eq:interface_conditions} it is required that $\alpha=\nu=\kappa=1$.

 {\bf Example 3}:  We consider $\Omega_s=[0,1]\times [0,1]$ and $\Omega_d=[0,1]\times [-1,0]$. The equation is constructed so that the analytical solution is given by
 \begin{align*}
 u& =\eta'(y) \cos x,\\
 v& =\eta(y)\sin x, \\
 p&=0,\\
 \phi&=e^y\sin x,
 \end{align*}
 where 
 \begin{equation*}
\eta(y) = -\kappa -\frac{y}{2\nu}+\left(-\frac{\alpha}{4\nu^2}+\frac{\kappa}{2}\right)y^2.
\end{equation*} 
Using interface condition \eqref{eq:masscon}, there is no constraint on   $\kappa$.
 Using interface condition \eqref{eq:balance}, there is no constraint on   $\nu$.
 Using interface condition \eqref{eq:BJS}, there is no constraint on   $\alpha$ and $\nu$.  Our numerical experiments suggest that $\alpha=\nu$ gives a good performance.

\subsection{Convergence order study}
 
First, we check the convergence order of the velocity and pressure for the three examples.

{\bf Example 1}:  Table \ref{tab:conve_example1} shows the convergence rates for the values of the physical parameters $\alpha=\nu=\kappa=1$.  We observe second-order convergence for the velocity and pressure components for Stokes, while for the Darcy the convergence order of $\phi$ is approximately 1.8.  
\begin{table}[htp]
\caption{Convergence rates for Example 1. Each row shows the ratio between error norms for two adjacent grids.}
\centering
\begin{tabular}{ccccc}
\hline
$n_1/n_2$     & 32/64   &64/128   &128/256    &256/512   \\ \hline     
 $u$    &   1.9888  &  1.9957 &   1.9983 &   1.9994 \\ 
$v$  &  1.9895  &  1.9965  &  1.9990  &  1.9998 \\ 
$p$   & 1.9946  &   1.9982  &   1.9994 &     1.9998 \\
$\phi$    &   1.7136  &  1.7759  &  1.8198  &  1.8514  \\
\hline
\end{tabular}\label{tab:conve_example1}
\end{table}

{\bf Example 2}:  Table \ref{tab:conve_example2} shows the convergence rates for the values of the physical parameters $\alpha=\nu=\kappa=1$. We observe second-order convergence for the pressure components of Stokes and first-order convergence for the remaining components.

\begin{table}[htp]
\caption{Convergence rates for Example 2. Each row shows the ratio between error norms for two adjacent grids.}
\centering
\begin{tabular}{ccccc}
\hline
$n_1/n_2$   &32/64    & 64/128  &128/256   &256/512   \\ \hline     
$u$        &   1.9070  &   1.7649  &  1.4823 &   1.2078 \\
$v$     & 2.0639  &  1.9929   & 1.5441  &  1.0405 \\
$p$    & 2.0035  &  2.0197  &  2.0306  &  2.0009 \\
 $\phi$    & 1.0139  &  1.0072  &  1.0036  &  1.0018  \\
\hline
\end{tabular}\label{tab:conve_example2}
\end{table}

{\bf Example 3}: 
Table \ref{tab:conve_example32} shows the convergence rates for  $\nu=1$ and $\kappa=10^{-2}$, where we observe first-order convergence for all components. This is typical for most values of the physical parameters that we have tested. We note that for $\nu=\kappa=1$ we have observed nearly second-order convergence rates for all components. 

\begin{table}[htp]
\caption{Convergence rates for Example 3 with $\nu=1$ and $\kappa=10^{-2}$. Each row shows the ratio between error norms for two adjacent grids.}
\centering
\begin{tabular}{ccccc}
\hline
$n_1/n_2$   & 32/64  &64/128   &  128/256  & 256/512   \\ \hline   
   $u$     &   1.0386   & 1.0158 &   1.0065 &   1.0027 \\
 $v$    &  1.0940   & 1.0458   & 1.0224   & 1.0110 \\
  $p$ &   1.0767  &  1.0351  &  1.0165   & 1.0079 \\
$\phi$    & 0.9750   & 0.9872   & 0.9935   & 0.9968 \\
\hline
\end{tabular}\label{tab:conve_example32}
\end{table} 

In summary, in all examples we observe either first or second-order convergence, depending on the values of the physical parameters and the model problems. This is in line with or better than the theoretically-guaranteed first-order convergence \cite{shiue2018convergence}. We also note that although the values of the meshsize $h$ used in our tests do not always satisfy \eqref{eq:convcond}, the scheme still converges and we obtain the theoretically-guaranteed first-order convergence. 
 
 In the remainder of this section we conduct our numerical tests using Example 3.

\subsection{Eigenvalue distribution of the double saddle-point matrix (Example 3)}
\begin{figure}[hbt]
\centering
\includegraphics[width=0.49\textwidth]{./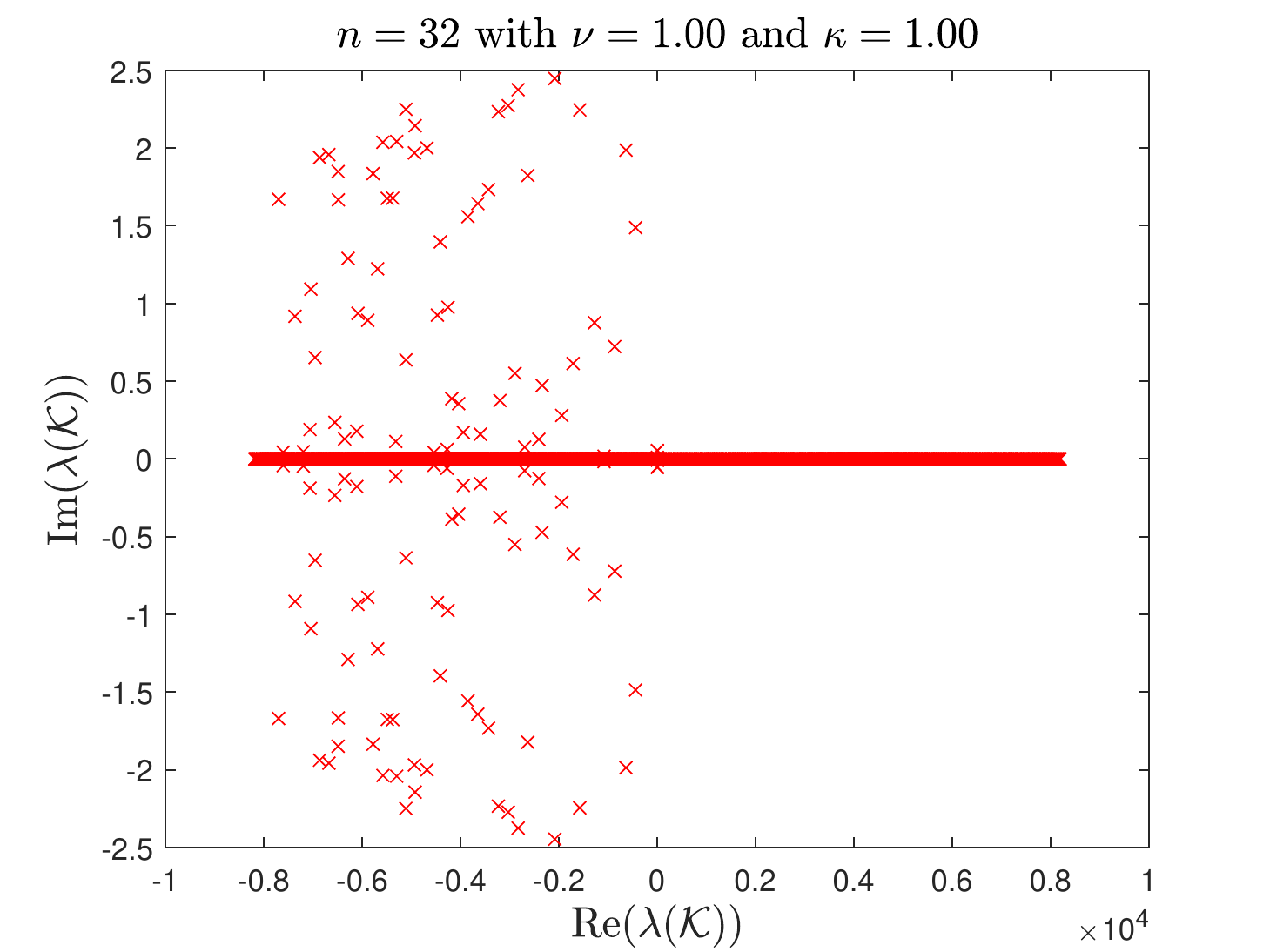}
\includegraphics[width=0.49\textwidth]{./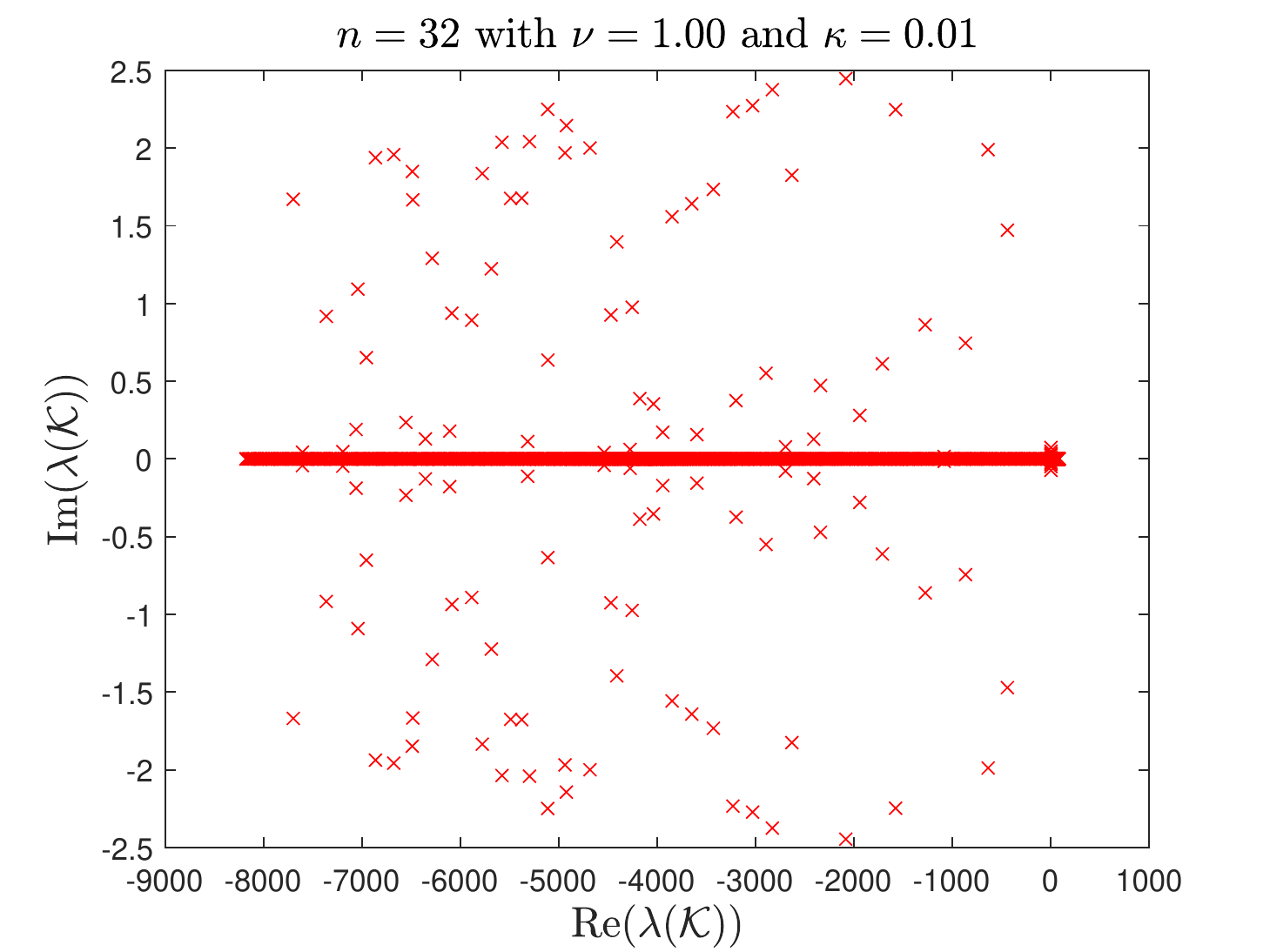}
\includegraphics[width=0.49\textwidth]{./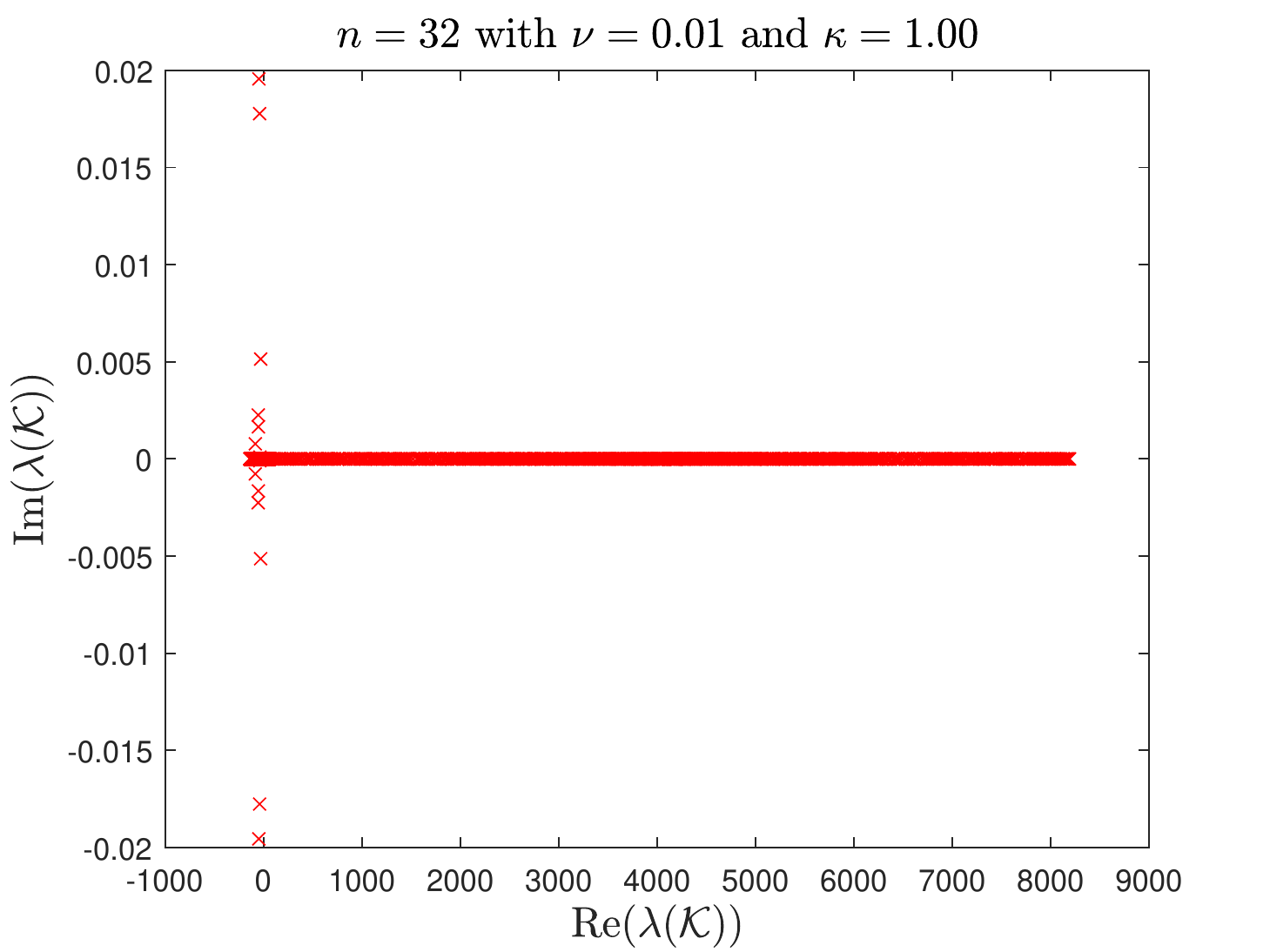}
\includegraphics[width=0.49\textwidth]{./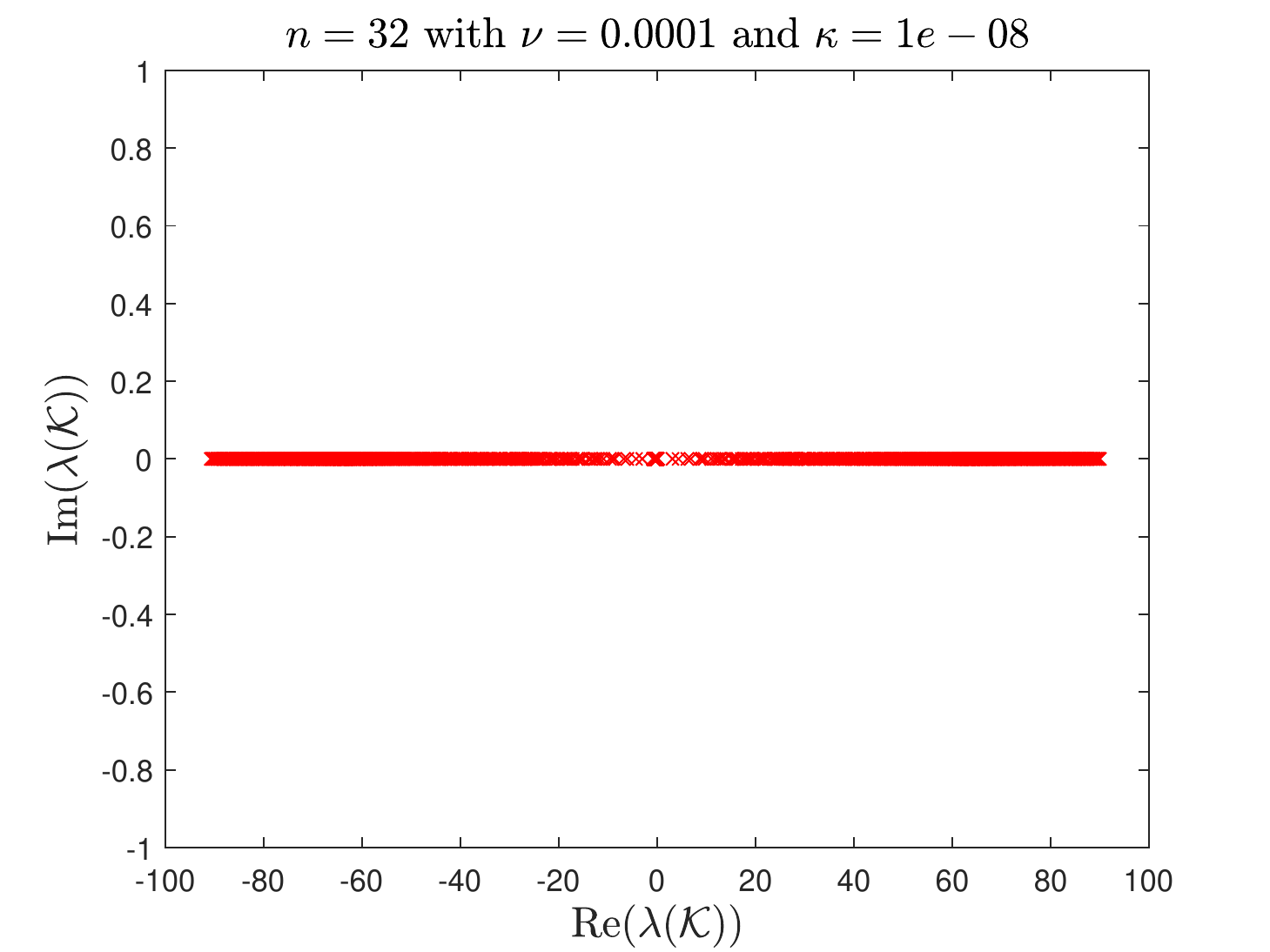}
\caption{The eigenvalue distribution of $\mathcal{K}$ with different values of $\nu$ and $\kappa$. \label{fig:spectrum}}
\end{figure}  

We  explore the effect of $\kappa$ and $\nu$ on the eigenvalue distribution of $\mathcal{K}$ for Example 3. We  take $n=32$ and vary the values of $\kappa$ and $\nu$. The results are shown in Figure~\ref{fig:spectrum}.  Notice that in all examples, the magnitudes of the real parts of the eigenvalues are significantly larger than the magnitudes of the imaginary parts.

We observe that for $\nu=\kappa=1$ (top left plot) the real part of the eigenvalues is spread rather evenly (in terms of magnitudes) over both sides of the real axis. We also notice that the eigenvalues with a negative real part are complex, whereas the eigenvalues on the right half of the plane are real. While the imaginary parts of the eigenvalues do not exceed approximately 2.5, the largest positive and negative real parts are almost $10^4$ in value. 

Taking $\kappa=0.01$ and keeping $\nu=1$ (top right plot) generates a rather dramatic effect on the real part of the eigenvalues; they are shifted towards the negative axis. In our computations we have found that the eigenvalue with the algebraically maximal real part was approximately equal to $81.9$, whereas the eigenvalue with the algebraically minimal real part was approximately $-8,183.0$. 

Taking $\kappa=1$ and $\nu=0.01$ (bottom left plot) shifts the real parts of the eigenvalues to be mostly positive. The scales of the imaginary parts are now smaller. The algebraically smallest eigenvalue in this case was $-0.4$ and the algebraically largest eigenvalue was approximately $8,189.5$.

Finally, we show the interesting case where $\nu=10^{-4}$ and $\kappa=10^{-8}$ (bottom right plot). All eigenvalues in this case are real and are spread over both axes in a rather symmetrical fashion. The algebraically maximal value in this case was $90.0$ and the  algebraically minimal one was $-90.8$.

The above observations indicate that the spectral properties of the coefficient matrix highly depend on the values of the physical parameters $\kappa$ and $\nu$.

\subsection{GMRES performance}

In our numerical tests we run GMRES(20) and stop the iteration once the initial relative residual is reduced by a factor of $10^{-8}$ or a maximum iteration count of 500 iterations has been reached. For the incomplete Cholesky factorization of the Schur complement $S_1$, we use a drop tolerance of $10^{-2}$.

In Table \ref{tab:M1M2-inexact} we report the iteration counts of preconditioned GMRES using preconditioners $\widehat{\mathcal{M}}_1$ and $\widehat{\mathcal{M}}_2$. We see that these two preconditioners scale poorly with respect to small physical parameters. To better understand this behavior, we explore an improved version of the preconditioner, where we use the approximation $\widehat{S}_1$ and exact $S_2$ for the Schur complements in $\mathcal{M}_1$ and $\mathcal{M}_2$. We report the corresponding results in Table \ref{tab:M1M2-inexactS1-exactS2}. We see a much better performance. However, the cost of inverting $S_2$ exactly is too high in practice, and we seek less costly alternatives. We thus consider approximations of $\mathcal{M}_3$: we use the simple  approximations $\widehat{S}_1$ and  $\widehat{S}_2$ defined in \eqref{eq:hatS1} and \eqref{eq:hatS2}, respectively, and include the block $B$  to couple the Stokes velocity and Darcy variable. This is the preconditioning approach that we have found to be the most promising.

 \begin{table}[htp]
 \caption{Iteration counts of GMRES(20) for the preconditioners  $\widehat{\mathcal{M}}_1$ and  $\widehat{\mathcal{M}}_2$  with $\nu=1$ and varying $n$ and $\kappa$.
The symbol `-' marks no convergence to a relative residual tolerance of $10^{-8}$ within 500 iterations. The two schemes failed to converge for $\kappa<10^{-4}$.}
\centering
\begin{tabular}{l|ccc|ccc }
\hline
  \multirow{1}{*}{\diagbox{$\kappa$}{$n$} } &
      \multicolumn{3}{c}{$\widehat{\mathcal{M}}_1$} &
      \multicolumn{3}{c}{$\widehat{\mathcal{M}}_2$} \\      
   &$32$      &$64$     &$128$    
   &$32$      &$64$     &$128$   
 \\ \hline
$10^{0}  $  	& 60   &  62      	&      60      	&   55   	&  57  	&    62   	\\
$10^{-1} $ 	&67    &  75      	&      87      	&   62   	&  64 		&    70     	\\
$10^{-2} $ 	&186  &   215     	&    275     	&   67   	&  125 	&    114    	\\
$10^{-3} $ 	&-     	&   -     	&        -    	&   99		&  159 	&    204     	\\
$10^{-4} $	& 444 &   285     	&    -     	&   239   	&   78 	&     -         	\\
$10^{-5} $ 	& -  	&   -     	&       -     	&    -  	&   - 		&      -     	\\
\hline
 \end{tabular}\label{tab:M1M2-inexact}
\end{table}

 \begin{table}[htp]
 \caption{Iteration counts of GMRES(20) for the inexact versions $\mathcal{M}_{1,in}$ and $\mathcal{M}_{2,in}$ corresponding to preconditioners  $\mathcal{M}_1$ and  $\mathcal{M}_2$  with $\nu=1$ and varying $n$ and $\kappa$, using approximation $\widehat{S}_1$ and  the exact $S_2$.}
\centering
\begin{tabular}{l|cc|cc }
\hline
  \multirow{1}{*}{\diagbox{$\kappa$}{$n$} } &
      \multicolumn{2}{c}{$\mathcal{M}_{1,in}$} &
      \multicolumn{2}{c}{$\mathcal{M}_{2,in}$} \\      
   &$32$      &$64$       
   &$32$      &$64$       
 \\ \hline
$10^{0}  $  	&        14 	&   15      	&    10	&   	  11    	\\
$10^{-1} $ 	&         17	&    19     	&    12	&   	   14   	\\
$10^{-2} $ 	&        25	&    26      	&    15	&   	   16   	\\
$10^{-3} $ 	&        33 	&    35      	&    17	&   	   21   	\\
$10^{-4} $	&        34 	&   40       	&    17	&   	   21   	\\
$10^{-5} $ 	&        29 	&   38       	&    16	&   	    21  	\\
$10^{-6} $	&        24 	&   34       	&    15	&   	   19   	\\
$10^{-7} $ 	&        25 	&   31       	&    15	&   	    17  	\\
$10^{-8} $ 	&        22 	&   31       	&    14	&   	    18  	\\
\hline
 \end{tabular}\label{tab:M1M2-inexactS1-exactS2}
\end{table}

As per Theorem \ref{thm:M3K}, the preconditioned matrix $\mathcal{M}_3^{-1} \mathcal{K}$ has one eigenvalue 1 with a minimal polynomial of degree 3.  We have confirmed for this ideal (yet impractical) preconditioner that GMRES takes three iterations to converge. 

In all our experiments reported below, we use the approximation $\hat{S}_2$ in \eqref{eq:hatS2} for $S_2$; we have found this approximation to be robust with respect to the physical parameters. On the other hand, the quality of the approximation of $S_1$ has a more dramatic effect on convergence of GMRES, as we discuss below.

In Table \ref{tab:M3simple} we show that the approximation of $S_1$ based on the scaled identity approximation of $T$, namely $\tilde{S}_1$ given in \eqref{eq:tildeS1}, is only effective for relatively large values of $\nu$ and $\kappa$. We set $\nu=1$ and  observe a good degree of scalability (nearly constant iteration counts) for $\kappa=1$ and $\kappa=0.1$, but convergence starts degrading for smaller values of $\kappa$, with poor convergence for $\kappa \leq 10^{-4}$.

\begin{table} 
 \caption{Iteration counts of GMRES(20) with an inexact version of $\mathcal{M}_3$, using a scaled identity approximation of $S_1$ and $\widehat{S}_2$ with $\nu=1$ and varying $n$ and $\kappa$. The symbol `-' marks no convergence to a relative residual tolerance of $10^{-8}$ within 500 iterations.}
\centering
\begin{tabular}{c|ccccccccc}
\hline
\diagbox{$n$}{$\kappa$}     &  $10^0$ & $10^{-1}$ & $10^{-2}$ & $10^{-3}$  & $10^{-4}$ & $10^{-5}$ & $10^{-6}$  & $10^{-7}$ & $10^{-8}$     \\ \hline     
32 &     18  & 19  & 21   & 37  &  49   & 76  & 79  & 360   & -  \\
64 &     18  & 19  & 24   & 39  &  75   & -  & -  & -   & -  \\
128 &   19  & 20  & 25   & 44  & 280    &  - & -  & -   & -  \\
256 &   20  & 21  & 28   & 44  & -    & -  & -  & -   & -  \\
512 &  21   &  22 &   31  &  39  &  448 &  105 &  -  & -&  - \\
1024 & 22  & 23   &  31     &  37     &  464     &  300     &  -  & -  & - \\
\hline
\end{tabular}\label{tab:M3simple}
\end{table} 

In Tables \ref{tab:nu1e0inexact} and \ref{tab:nu1em2inexact} we consider the much superior approximation of $S_1$ based on the incomplete Cholesky factorization with drop tolerance $10^{-2}$, namely $\hat{S}_1$ defined in \eqref{eq:hatS1}. We see that for both values of $\nu$ and varying values of $\kappa$, the preconditioner $\widehat{\mathcal{M}}_3$  is quite robust, although convergence degrades as $\kappa$ becomes smaller. In Table \ref{tab:nu1em2exactS1} we replace the approximation of $\hat{S}_1$ by the exact $S_1$, just to confirm that indeed, the source of the decline in performance for small values of $\kappa$ is related to the quality of the approximation of $S_1$. We therefore expect that a better approximation, for example an incomplete Cholesky factorization with a tighter drop tolerance would yield faster convergence in most cases. 

\begin{table}
 \caption{Iteration counts of GMRES(20) for  the preconditioner $\widehat{\mathcal{M}}_3$ with $\nu=1$ and varying $n$ and $\kappa$.}
\centering
\begin{tabular}{c|ccccccccc}
\hline
\diagbox{$n$}{$\kappa$}     &  $10^0$ & $10^{-1}$ & $10^{-2}$ & $10^{-3}$  & $10^{-4}$ & $10^{-5}$ & $10^{-6}$  & $10^{-7}$ & $10^{-8}$     \\ \hline     
32 &     18  &  17  &  18  &  18 &   18  &  18  &  20 &   21  &  23 \\
64 &     19  &  19  &  19  &  20 &   21  &  23  &  24 &   38  &  39 \\
128 &   20  &  20  &  20  &  23 &   24  &  35  &  37 &   37  &  38 \\
256 &   21  &  22  &  22  &  25 &   37  &  32  &  35 &   37  &  39 \\
512 &   22  &  23  &  23  &  36 &   36  &  34  &  38 &   39  &  42 \\
1024 & 24  &  25  &  24  &  39 &   37  &  41  &  59 &   60  &  61 \\

 \hline
\end{tabular}\label{tab:nu1e0inexact}
\end{table}

\begin{table} 
 \caption{Iteration counts of GMRES(20) for  the preconditioner $\widehat{\mathcal{M}}_3$ with $\nu=10^{-2}$ and varying $n$ and $\kappa$.}
\centering
\begin{tabular}{c|ccccccccc}
\hline
\diagbox{$n$}{$\kappa$}     &  $10^0$ & $10^{-1}$ & $10^{-2}$ & $10^{-3}$  & $10^{-4}$ & $10^{-5}$ & $10^{-6}$  & $10^{-7}$ & $10^{-8}$     \\ \hline     
32  &   16   & 15  & 16   & 16  & 17    & 19  & 20  & 37    &  39  \\
64 &    17   &  16 & 17   & 18  & 20    &  21  & 35  & 36   & 38  \\
128 &  18   & 18  & 18   &  11 &  21   & 32  &  33  & 35   &  37  \\
256 &  18    & 20   & 21   & 11  & 11    & 11  & 11  & 11   & 11  \\
512 &  20   & 30  & 14   & 13   & 12    & 12  & 11  & 11   & 11  \\
1024 & 20  & 32  & 16   & 14  & 13    & 13  & 12  & 12   & 12  \\

 \hline
\end{tabular}\label{tab:nu1em2inexact}
\end{table}

\begin{table}
 \caption{Iteration counts of GMRES(20) for  the inexact version of preconditioner $\mathcal{M}_3$ with  $\nu=10^{-2}$ and varying $n$ and $\kappa$, using  the exact $S_1$ and approximation $\widehat{S}_2$.}
\centering
\begin{tabular}{c|ccccccccc}
\hline
\diagbox{$n$}{$\kappa$}     &  $10^0$ & $10^{-1}$ & $10^{-2}$ & $10^{-3}$  & $10^{-4}$ & $10^{-5}$ & $10^{-6}$  & $10^{-7}$ & $10^{-8}$     \\ \hline     
32 &    14   &  14   &  15   &  15  & 16    & 17   & 19   & 20    & 22  \\
64 &     14  &  14  &  15  & 15  & 15    &  17 & 19  & 20   & 31  \\
128 &    14 &  14  & 14   & 7  & 15    & 16  & 18  & 20   & 37  \\
 \hline
\end{tabular}\label{tab:nu1em2exactS1}
\end{table}  
   
Finally, in Table~\ref{tab:nu1em4inexact} we show that when the difference in scale between $\nu$ and $\kappa$ is smaller, then preconditioned GMRES with $\widehat{\mathcal{M}}_3$ performs remarkably well even when the parameters are small. 

\begin{table}
 \caption{Iteration counts of GMRES(20) for the preconditioner $\widehat{\mathcal{M}}_3$ with $\nu=10^{-4}$ and varying $n$ and $\kappa$.}
\centering
\begin{tabular}{c|ccccccccc}
\hline
\diagbox{$n$}{$\kappa$}     &  $10^0$ & $10^{-1}$ & $10^{-2}$ & $10^{-3}$  & $10^{-4}$ & $10^{-5}$ & $10^{-6}$  & $10^{-7}$ & $10^{-8}$     \\ \hline     
32  &  9   & 8  &  7  & 7  & 7    & 7  & 7  & 7   & 7  \\
64  &  9  & 8  &  6  & 6  & 6    & 6  & 6  & 6   & 6  \\
128  &  10   & 7  &  6  & 6  & 6    & 6  & 6  & 6   & 6  \\
256  &  11   & 8  &  6  & 6  & 6    & 6  & 6  & 6   & 6  \\
512  &  12   & 9  &  7  & 6  & 6    & 6  & 6  & 6   & 6  \\
1024  &  14   & 9  &  7  & 6  & 5    & 5  &  5 & 5   & 5  \\
 \hline
\end{tabular}\label{tab:nu1em4inexact}
\end{table}

 \section{Concluding remarks}\label{sec:conclusion}
 We have considered the MAC discretization of the Stokes--Darcy equations and have designed a robust and scalable preconditioner for the corresponding linear system. Our conclusions are: (i) The MAC discretization gives rise to attractive sparsity patterns of some of the block matrices, which we are able to take advantage of for approximating the Schur complements. (ii) It is crucial to include the coupling equations (interface conditions) in the preconditioner. (iii) The nonsymmetry of the coefficient matrix is mild and it is possible to design a solver based on spectral considerations. The analysis reveals a rich and interesting spectral structure. 
 
 The inexact block lower triangular preconditioner $\widehat{\mathcal{M}}_3$ seems promising in terms of robustness with respect to the values of the   physical parameters. Among its attractive features is our ability to form effective and relatively cheap approximations of the Schur complements $S_1$ and $S_2$.

 
 \appendix
 
 \section{Related block preconditioners} 
 
We have considered several additional options for block preconditioners, with some minor changes (e.g., sign changes) in comparison to the ones we have analyzed in Section~\ref{sec:spectral}:
\begin{equation*}
 \mathcal{\tilde{M}}_1 =\begin{pmatrix}
 A_d &     0       & 0  \\ 
0    &    -S_1    & 0 \\
0     &     0       & S_2
 \end{pmatrix}, \quad
 \mathcal{\tilde{M}}_2 =\begin{pmatrix}
 A_d &     0       & 0  \\ 
G    &    -S_1    & 0 \\
0     &     0       & S_2
 \end{pmatrix},\quad 
  \mathcal{\tilde{M}}_3=  \begin{pmatrix}
 A_d &     0       & 0  \\ 
G     &     S_1    & 0 \\
0     &     B        & S_2
 \end{pmatrix}. 
 \end{equation*}
The preconditioned matrix  $\mathcal{\tilde{M}}_1^{-1} \mathcal{K}$ has a large number of complex eigenvalues. The preconditioned matrix $\mathcal{\tilde{M}}_2^{-1} \mathcal{K}$ has three distinct eigenvalues: the eigenvalue 1 with algebraic multiplicity $2n^2-n$ and the complex eigenvalues $\frac{1 \pm \sqrt{3} \imath}{2}$ ($\imath^2=-1$) with multiplicity $n^2$ each. Compare this with 
$\mathcal{M}_2^{-1} \mathcal{K}$, which has four distinct eigenvalues, as per Theorem~\ref{thm:M2K}. 
 The preconditioned matrix $\mathcal{\tilde{M}}_3^{-1} \mathcal{K}$ has three distinct eigenvalues: the eigenvalue 1 with algebraic multiplicity $n^2$, the eigenvalue $-1$ with algebraic multiplicity $n^2-n$, and the eigenvalues $\pm \sqrt{2}-1$ with multiplicities $n^2$ each. We prove these results below.

\begin{theorem} 
The eigenvalues of $ \mathcal{\tilde{M}}_2^{-1}\mathcal{K}$ are 
\begin{enumerate}[(i)]
\item $1$ with multiplicity $2n^2-n$; 
\item $\frac{1\pm \sqrt{3} i}{2}$ with multiplicity $n^2$ each.
\end{enumerate}
\end{theorem}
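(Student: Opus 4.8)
The plan is to mirror the proof of Theorem~\ref{thm:M2K}, exploiting the fact that $\mathcal{\tilde{M}}_2$ differs from $\mathcal{M}_2$ only in the sign of its $(2,2)$ block. First I would form the inverse
\begin{equation*}
\mathcal{\tilde{M}}_2^{-1} = \begin{pmatrix}
A_d^{-1} & 0 & 0 \\
S_1^{-1} G A_d^{-1} & -S_1^{-1} & 0 \\
0 & 0 & S_2^{-1}
\end{pmatrix},
\end{equation*}
and then, using $S_1 = A_s + G A_d^{-1} G^T$ (see \eqref{eq:S1}) and $S_2 = B S_1^{-1} B^T$ (see \eqref{eq:S2}), multiply out $\mathcal{\tilde{M}}_2^{-1}\mathcal{K}$ exactly as in the proof of Theorem~\ref{thm:M2K}; the $(2,2)$ block collapses to $S_1^{-1}(G A_d^{-1} G^T + A_s) = I$, and one obtains
\begin{equation*}
\mathcal{\tilde{M}}_2^{-1}\mathcal{K} = \begin{pmatrix}
I & A_d^{-1} G^T & 0 \\
0 & I & -S_1^{-1} B^T \\
0 & S_2^{-1} B & 0
\end{pmatrix},
\end{equation*}
which is the same as $\mathcal{M}_2^{-1}\mathcal{K}$ except that the $(2,2)$ block is now $+I$ instead of $-I$.

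Rather than hunting for an eigenbasis of the full matrix, I would read the spectrum off its block structure. Grouping the second and third block components (the ones carrying $\bm{u}_h$ and $p_h$), the matrix is block upper triangular,
\begin{equation*}
\mathcal{\tilde{M}}_2^{-1}\mathcal{K} = \begin{pmatrix} I_{n^2} & \ast \\ 0 & N \end{pmatrix}, \qquad N = \begin{pmatrix} I_{2n^2-n} & -S_1^{-1} B^T \\ S_2^{-1} B & 0 \end{pmatrix} \in \mathbb{R}^{(3n^2-n)\times(3n^2-n)},
\end{equation*}
so its characteristic polynomial factors as $(\lambda-1)^{n^2}\det(\lambda I - N)$. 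It then suffices to analyze $N$. For $\mu=1$ the first block row of $N$ forces $B^T z = 0$, hence $z=0$ because $B$ has full rank; the second block row then gives $By=0$, so the eigenspace is $\{(y,0): y \in \ker B\}$, of dimension $2n^2-n-n^2 = n^2-n$. For $\mu \ne 1$ the first row gives $y = \tfrac{1}{1-\mu}S_1^{-1}B^T z$; substituting into the second row and using $B S_1^{-1} B^T = S_2$ yields $\tfrac{1}{1-\mu}z = \mu z$, and since $z=0$ would force $y=0$ we must have $z \ne 0$ and $\mu^2-\mu+1=0$, i.e.\ $\mu = \tfrac{1\pm\sqrt{3}\,i}{2}$; for each of the two roots the eigenvectors are parametrized injectively by $z \in \mathbb{C}^{n^2}$, so the geometric multiplicity is $n^2$.

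To finish, I would invoke a dimension count: the geometric multiplicities just found for $N$ add up to $(n^2-n)+n^2+n^2 = 3n^2-n = \dim N$, so $N$ is diagonalizable and $\det(\lambda I - N) = (\lambda-1)^{n^2-n}(\lambda^2-\lambda+1)^{n^2}$. Multiplying by the $(\lambda-1)^{n^2}$ contributed by the leading $I_{n^2}$ block gives the characteristic polynomial $(\lambda-1)^{2n^2-n}(\lambda^2-\lambda+1)^{n^2}$ of $\mathcal{\tilde{M}}_2^{-1}\mathcal{K}$, which is exactly the assertion. The step I would be most careful about is precisely this one: $\mathcal{\tilde{M}}_2^{-1}\mathcal{K}$ is itself \emph{defective} at $\lambda=1$ — a direct count yields only $n^2 + \dim(\ker G^T \cap \ker B) = n^2 + (n-1)^2$ genuine eigenvectors there, using Lemma~\ref{lem:rank-prop-B} — so the algebraic multiplicity $2n^2-n$ cannot be obtained by exhibiting eigenvectors of the full matrix; passing to the smaller, diagonalizable block $N$ through the triangular decomposition is what makes the multiplicity count rigorous.
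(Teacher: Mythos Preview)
Your proof is correct and takes a cleaner route than the paper's. Both arguments begin by computing the same preconditioned matrix, but then diverge: the paper works directly with eigenvectors of the full $3\times 3$ block matrix and counts linearly independent eigenvectors for each eigenvalue, while you exploit the block upper-triangular splitting $\bigl(\begin{smallmatrix} I_{n^2} & \ast \\ 0 & N \end{smallmatrix}\bigr)$ to factor the characteristic polynomial and then show that $N$ is diagonalizable by a dimension count. Your route has a genuine advantage here: as you correctly observe, the full preconditioned matrix is \emph{defective} at $\lambda=1$, with geometric multiplicity only $n^2+(n-1)^2$, so a direct eigenvector count on the full matrix cannot by itself establish the algebraic multiplicity $2n^2-n$. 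The paper's argument is loose on precisely this point---it records ``multiplicity $n^2-n$'' for the $\lambda=1$ eigenvectors with $y\neq 0$, citing the nullity of $B$, even though such $y$ must simultaneously satisfy $G^Ty=0$, which by Lemma~\ref{lem:rank-prop-B} cuts that space down to $(n-1)^2$. Your passage to the diagonalizable subblock $N$ sidesteps the defect entirely and makes the algebraic multiplicities rigorous, at the modest cost of one extra line justifying that the sum of geometric multiplicities of $N$ equals its size.
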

\begin{proof}
The preconditioned matrix is given by
 \begin{equation*} 
 \mathcal{\tilde{M}}_2^{-1}\mathcal{K} =\begin{pmatrix}
 I &     A_d^{-1} G^T      & 0  \\ 
0    &    I    & -S_1^{-1}B^T \\
0     &    S_2^{-1} B      &0
 \end{pmatrix}.
\end{equation*}
Let $\begin{pmatrix} x^T &  y^T & z^T \end{pmatrix}^T$ be an eigenvector of $ \mathcal{\tilde{M}}_2^{-1}\mathcal{K}$ associated with eigenvalue $\lambda$.
We write the corresponding eigenvalue problem as follows:
\begin{subequations}
\label{eq:eig}
\begin{align}
x +A_d^{-1}G^Ty &=  \lambda x, \label{eq:eig-express-1-hatM2}\\
y-S_1^{-1}B^Tz&=\lambda y, \label{eq:eig-express-2-hatM2}\\
 ( B S_1^{-1}B^T)^{-1} B y&= \lambda z. \label{eq:eig-express-3-hatM2}
\end{align}
\end{subequations}
We have $\begin{pmatrix} x^T &  y^T & z^T \end{pmatrix}^T=\begin{pmatrix} x^T  &  0  & 0 \end{pmatrix}^T$ where $x \neq 0$ is an eigenvector of $ \mathcal{\tilde{M}}_2^{-1}\mathcal{K}$ with  $\lambda =1$.  Since $x\in \mathbb{R}^{n^2\times 1}$,  $1$ is an eigenvalue with multiplicity $n^2$.

If $\lambda=1$ and $y \neq 0$, 
the three equations of \eqref{eq:eig} are simplified to
\begin{subequations}
\begin{align}
A_d^{-1}G^Ty &= 0, \label{eq:eig-express-11-hatM2}\\
 B^Tz&=0,  \label{eq:eig-express-12-hatM2}\\
 B y&=0. \label{eq:eig-express-13-hatM2}
\end{align}
\end{subequations}
Since $B^T$ has full rank,  \eqref{eq:eig-express-12-hatM2} leads to $z=0$.
From \eqref{eq:eig-express-13-hatM2} we have $By=0$. Since  $B \in \mathbb{R}^{n^2 \times (2n^2-n)}$ has rank $n^2$, the  null space of  $B$  has dimension $(2n^2-n)-n^2=n^2-n$. From the proof of Theorem \ref{thm:eigs-M1K}, $y$ satisfies $G^Ty=0$. Thus, the multiplicity of $1$ with eigenvector $\begin{pmatrix} x^T  &  y^T  & 0 \end{pmatrix}^T$ with $y\neq 0$ is $n^2-n$. Therefore, $1$ has multiplicity $2n^2-n$.

If $\lambda  \neq 1$,  from \eqref{eq:eig-express-2-hatM2}, we have  $B y = \frac{1}{1-\lambda} B S_1^{-1}B^T z$. Using \eqref{eq:eig-express-3-hatM2}, we have
\begin{equation*}
\frac{1}{1-\lambda} z= \lambda z.
\end{equation*}
Thus,  $z\neq 0$ and  
\begin{equation*}
\lambda^2-\lambda +1=0,
\end{equation*} 
that is $\lambda =\frac{1\pm \sqrt{3}i}{2}$. Since $ z \neq 0 \in \mathbb{R}^{n^2\times 1}$, the eigenvalues  $\frac{1\pm \sqrt{3}i}{2}$ have multiplicity $n^2$  each.
\end{proof}

\begin{theorem} 
The eigenvalues of $ \mathcal{\tilde{M}}_3^{-1}\mathcal{K}$ are 
\begin{enumerate}[(i)]
\item $1$ with multiplicity $n^2$; 
\item $-1$ with multiplicity $n^2-n$;  
\item  $\sqrt{2}-1\approx 0.4142 $ and $-\sqrt{2}-1 \approx  -2.4142$  with multiplicity   $n^2$ each.
\end{enumerate}
\end{theorem}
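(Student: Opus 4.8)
The plan is to mirror the argument just given for $\mathcal{\tilde{M}}_2$: first produce a closed form for the preconditioned matrix $\mathcal{\tilde{M}}_3^{-1}\mathcal{K}$, then read off the spectrum from a short case analysis of the associated eigenvalue problem, and finally close the multiplicity bookkeeping by a dimension count. Rather than inverting $\mathcal{\tilde{M}}_3$ from scratch, I would reuse the factorization $\mathcal{K} = \mathcal{M}_3\,U$ from \eqref{eq:block_K} (already exploited in the proof of Theorem~\ref{thm:M3K}, where $\mathcal{M}_3^{-1}\mathcal{K}=U$), and write $\mathcal{\tilde{M}}_3^{-1}\mathcal{K} = (\mathcal{\tilde{M}}_3^{-1}\mathcal{M}_3)\,U$. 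Since $\mathcal{\tilde{M}}_3$ and $\mathcal{M}_3$ differ only in the sign of their $(2,2)$ block, a short block computation gives $\mathcal{\tilde{M}}_3^{-1}\mathcal{M}_3 = \begin{pmatrix} I & 0 & 0 \\ 0 & -I & 0 \\ 0 & 2S_2^{-1}B & I\end{pmatrix}$, whence, using $S_2 = BS_1^{-1}B^T$ from \eqref{eq:S2} for the cancellations in the last column,
\[
\mathcal{\tilde{M}}_3^{-1}\mathcal{K} \;=\; \begin{pmatrix} I & A_d^{-1}G^T & 0 \\ 0 & -I & S_1^{-1}B^T \\ 0 & 2S_2^{-1}B & -I \end{pmatrix}.
\]

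Next I would let $(x^T, y^T, z^T)^T$ be an eigenvector for eigenvalue $\lambda$, obtaining $(\lambda-1)x = A_d^{-1}G^Ty$, $(\lambda+1)y = S_1^{-1}B^Tz$, and $(\lambda+1)z = 2S_2^{-1}By$. If $\lambda \ne \pm 1$, eliminating $y$ from the last two relations yields $(\lambda+1)^2 z = 2 S_2^{-1}(BS_1^{-1}B^T)z = 2z$; here $z \ne 0$ is forced (else $y = x = 0$), so $(\lambda+1)^2 = 2$, i.e.\ $\lambda = -1\pm\sqrt{2}$, and conversely every $z \in \mathbb{R}^{n^2}$ generates an eigenvector, so each of these eigenvalues has multiplicity $n^2$. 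For $\lambda = -1$ the second relation gives $B^Tz = 0$, hence $z = 0$ since $B$ has full rank, the third gives $By = 0$, and $x$ is then determined by $y$; as $\ker B$ has dimension $n^2-n$ this contributes $n^2 - n$. For $\lambda = 1$ the vectors $(x^T, 0, 0)^T$ are eigenvectors, contributing $n^2$. The exhibited eigenvectors (for distinct eigenvalues) are linearly independent and number $n^2 + (n^2-n) + n^2 + n^2 = 4n^2 - n$, the order of the matrix; hence the matrix is diagonalizable, these are all of its eigenvalues, and the algebraic multiplicities coincide with the counts above (in particular the $\lambda=1$ case produces no further eigenvectors).

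The bulk of this is routine, but the step needing genuine care is the derivation and verification of the closed form for $\mathcal{\tilde{M}}_3^{-1}\mathcal{K}$: the sign pattern and the factor $2$ in the $(3,2)$ block, together with the resulting $-I$ in the $(3,3)$ block, are exactly what produces the quadratic $(\lambda+1)^2 = 2$; a sign slip there would change the eigenvalues (e.g.\ to $\pm\sqrt{3}$). A secondary point is confirming that the parametrizations used to count multiplicities are injective — for $\lambda = -1\pm\sqrt2$ the component $z$ recovers the eigenvector, and for $\lambda = -1$ the component $y$ does — so that the $n^2$ and $n^2-n$ lower bounds are attained exactly; the final dimension count then upgrades these to the stated algebraic multiplicities.
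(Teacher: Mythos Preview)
Your proposal is correct and follows essentially the same approach as the paper: derive the closed form of $\mathcal{\tilde{M}}_3^{-1}\mathcal{K}$, then do a case split on $\lambda$ for the resulting eigenvalue equations. Your derivation of the closed form via $(\mathcal{\tilde{M}}_3^{-1}\mathcal{M}_3)\,U$ is a nice shortcut compared to the paper's direct computation, and your final dimension count ($4n^2-n$ independent eigenvectors) makes the multiplicity bookkeeping slightly more explicit than the paper, which instead appeals to the block upper-triangular structure to split off the $n^2$ eigenvalues equal to $1$ before analyzing the lower $2\times2$ block.
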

 
 \begin{proof}
The preconditioned matrix is given by
 \begin{equation*} 
 \mathcal{\tilde{M}}_3^{-1}\mathcal{K} =\begin{pmatrix}
 I &     A_d^{-1} G^T      & 0  \\ 
0    &    -I    & S_1^{-1}B^T \\
0       &2S_2^{-1} S_1^{-1}         &-I
 \end{pmatrix}.
\end{equation*}
Thus, $n^2$ of the eigenvalues of $\mathcal{\tilde{M}}_3^{-1}\mathcal{K}$ are $1$, and the remaining ones are the eigenvalues of 
\begin{equation*}
H = \begin{pmatrix}
   -I    & S_1^{-1}B^T  \\
 2S_2^{-1} S_1^{-1}       &-I
 \end{pmatrix}.
\end{equation*} 
We write the corresponding eigenvalue problem for $H$ and obtain
\begin{subequations}
\begin{align}
-y +S_1^{-1}B^Tz &=\lambda y, \label{eq:hatM3K-yz1}\\
2 S_2^{-1}By-z &=\lambda z. \label{eq:hatM3K-yz2}
\end{align} 
\end{subequations}
If $\lambda =-1$, then
\begin{align*}
 S_1^{-1}B^Tz &=0\\
2 S_2^{-1}By &=0
\end{align*} 
Therefore, $B^Tz=0$ and $By=0$. Since $B$ is full rank, $z=0$ and   $y$ is the null space of $B$ with dimension $(2n^2-n)-n^2=n^2-n$.

If $\lambda \neq -1$, from \eqref{eq:hatM3K-yz1} we have $y=(1+\lambda)^{-1}S_1^{-1}B^Tz$. Therefore $y, z\neq 0$.  From \eqref{eq:hatM3K-yz2}, we have
\begin{equation*}
(1+\lambda) z= 2S_2^{-1} By=2S_2^{-1} (1+\lambda)^{-1}S_1^{-1}B^Tz=2(1+\lambda)^{-1} z,
\end{equation*} 
which gives
$(1+\lambda)^2=2.$
Therefore $\lambda =\pm \sqrt{2}-1$. Since $B^T$ has  full rank,  the eigenvalues $\pm \sqrt{2}-1$ have multiplicity $n^2$  each.
 \end{proof}

\bibliographystyle{siam}
\bibliography{ref_SD}
\end{document}